\newtheorem{theorem}{Theorem}[section]
\newtheorem{lemma}{Lemma}[section]
\newcommand{\R}{\mathbb{R}}
\newcommand{\C}{\mathbb{C}}
\newcommand{\brac}[1]{\left\{#1\right\}}
\begin{document}
%\lhead{}
%\rhead{}

\begin{flushleft}
\Large 
\noindent{\bf \Large Reconstruction of extended regions in EIT with a generalized Robin transmission condition}
\end{flushleft}

\vspace{0.2in}

{\bf  \large Govanni Granados, Isaac Harris, and Heejin Lee}\\
\indent {\small Department of Mathematics, Purdue University, West Lafayette, IN 47907 }\\
\indent {\small Email:  \texttt{ggranad@purdue.edu}, \texttt{harri814@purdue.edu} and   \texttt{lee4485@purdue.edu }}\\

%\vspace{0.2in}

%%%%%%%%%%%%%%%%%%%%%%%%%%%%%%%%%%%%%%%%
\begin{abstract}
\noindent We consider an inverse shape problem coming from electrical impedance tomography with a generalized Robin transmission condition. We will derive an algorithm in order to detect whether two materials that should be in contact are separated or delaminated. More precisely, we assume that the undamaged material or background state is known and shares an interface or boundary with the damaged subregion. The Robin transmission condition on this boundary asymptotically models delamination. We assume that the DtN operator is given from measuring the current on the surface of the material from an imposed voltage. We show that this mapping uniquely recovers the boundary parameters. Furthermore, using this electrostatic Cauchy data as physical measurements, we can determine if all of the coefficients from the Robin transmission condition are real-valued or complex-valued. We study these two cases separately and show that the regularized factorization method can be used to detect whether delamination has occurred and recover the damaged subregion. Numerical examples will be presented for both cases in two dimensions in the unit circle. 
\end{abstract}

\noindent {\bf Keywords}: Electrical Impedance Tomography $\cdot$ Regularized Factorization Method $\cdot$ Shape Reconstruction \\

\noindent {\bf MSC}:  35J05, 35J25

%%%%%%%%%%%%%%%%%%%%%%%%%%%%%%%%%%%%%%%%%%%%%%%%%%%%%%%%%%%
\section{Introduction}
In this paper, we consider an inverse shape problem in electrostatic imaging. The problem is motivated by electrical impedance tomography(EIT) where the goal is to reconstruct unknown interior defects from the measured electrostatic data on the surface of an object. We apply a Qualitative Method to recover said regions where the knowledge of the solution to a boundary value problem is used in their detection. We are interested in the scenario of reconstructing a subregion where a generalized Robin transmission condition is imposed. The generalized Robin condition we consider models the delamination of a subregion within a known material. This generalized Robin condition has that there is a jump in the normal derivative of the electrostatic potential across the delaminated subregion's boundary (i.e. the boundary between the healthy region and the unknown defective subregion) and is quasi-proportional to the electrostatic potential itself. This is a generalization to the boundary condition of the inverse shape problem studied in \cite{EIT-granados1} and \cite{harris2}. In non-invasive and non-destructive testing, one wishes to recover the location of all possible subregions of interest in a given material using data on the material's surface. For other works on non-destructive testing based on electromagnetic imaging we refer to \cite{EM-cakoni, deng1, kharkovsky1}. For related problems in medical imaging we refer to \cite{franchois1, tamori1}. The delamination corresponds to defects in the material that one wishes to recover without corrupting the integrity of a possibly healthy material. See \cite{eit-review,eit-review-amend,EIT-cheney, MUSIC-Hanke2,mueller-book} for more discussion on the theory and applications of EIT. 

We will derive an algorithm for recovering the defective subregions with little a priori information. One of the strengths of applying Qualitative Methods is that one does not need to know the number of defective regions or have an estimate for the boundary parameters. On the contrary, many iterative methods are locally convergent i.e. they require a ``good'' initial estimate for the unknown region and/or parameters to insure convergence to the solution of the inverse problem. Qualitative Methods allow one to reconstruct regions by deriving an `indicator' function from the measured data operator. This idea was first introduced in \cite{colton1} and is done by connecting the region of interest to the range of the measured data operator. We assume that voltage is applied to the known exterior boundary of the material and the induced current is measured also on the exterior boundary. Thus, we assume that we have full knowledge of the Dirichlet-to-Neumann(DtN) mapping on the exterior boundary for Laplace's equation in the domain with a delaminated subregions. In \cite{eit-transmission1,eit-transmission2,JIIP} the authors studied the inverse parameter problem for the EIT problem with with a Robin transmission condition. In the aforementioned papers, the authors studied the uniqueness, stability and numerical reconstruction for the inverse parameter problem using the Neumann-to-Dirichlet mapping. In \cite{CHK-gibc}, the authors analyzed this problem in $\R^2$ via a system of non-linear boundary integral equations. Also, see \cite{FM-GIBC} for the factorization method applied to inverse obstacle scattering with a similar boundary condition. Here, we study the inverse shape problem and prove that the DtN mapping uniquely determines the boundary coefficients as well as uniquely recovers the region of interest.

To this end, we consider the regularized factorization method, which is a type of sampling method, for solving the inverse shape problem. This regularized variant of the factorization method was initially studied in \cite{harris1} for a similar problem coming from diffuse optical tomography and see \cite{regfm2} for stability. This method is based on the analysis in \cite{arens,GLSM,EIT-FM,kirschbook}. The analysis we present here works in both $\R^2$ or $\R^3$ making these methods robust in their applications. By connecting the region of interest to the range of the measured DtN mapping, one can characterize the unknown region $D$ by the singular-value decomposition of the measured data operator. This makes the numerical implementation computationally inexpensive, whereas an iterative method would require solving multiple adjoint problems at each iteration.

The rest of the paper is structured as follows. In Section \ref{dp-ip}, we rigorously formulate the direct and inverse problem under consideration. We will use a variational method to prove well-posedness for the direct problem and derive the appropriate functional settings. We also define the current-gap operator $(\Lambda - \Lambda_0)$ that will be used to derive a regularized factorization method to recover the damaged subregion. In Section \ref{uniqueness-section}, we discuss the uniqueness of the inverse impedance problem by showing the injectivity of the mapping of the boundary coefficients to the DtN operator. We continue in Section \ref{complex-section} and Section \ref{real-section} where we analyze $(\Lambda - \Lambda_0)$ to derive a suitable factorization for the case when the boundary parameters are complex-valued and real-valued, respectively. This will allow us to develop a reconstruction algorithm, which will depend on range-based identities, to recover $D$. In Section \ref{numerical-validation}, numerical examples are presented in $\R^2$ for the unit circle to validate the analysis of the reconstruction algorithm. Finally, we give a brief summary and conclusion of the results in Section \ref{conclusion}.

%%%%%%%%%%%%%%%%%%%%%%%%%%%%%%%%%%%%%%%%%%%%%%%%%%%%%%%%%%%
\section{The Direct and Inverse Problem}\label{dp-ip}
We begin by considering the direct problem associated with the electrostatic imaging of a defective region with a generalized Robin transmission condition on its boundary. Assume that $ \Omega \subset \mathbb{R}^d$ is a simply connected open set with $\mathcal{C}^2$--boundary(or polygonal with no reentrant corners) $\partial \Omega$. Let $D \subset \Omega$ be a (possibly multiple) connected open set with class $\mathcal{C}^2$--boundary(or polygonal with no reentrant corners) $\partial D$. Throughout the paper, we will assume that $\text{dist}(\partial \Omega , \overline{D}) > 0$. For the material with defective region(s), we define $u \in H^1 (\Omega)$ as the solution to
\begin{equation}\label{gen-pde}
- \Delta u = 0 \enspace \text{in} \enspace \Omega \textbackslash \partial D \quad \text{with} \quad u \big|_{\partial \Omega} = f \quad \text{and} \quad [\![\partial_\nu u ]\!] \big|_{\partial D} = \mathcal{B}(u) \enspace \text{on} \enspace \partial D
\end{equation}
where
$$ [\![\partial_\nu u ]\!] \big|_{\partial D} := ( \partial_{\nu} u^{+} - \partial_{\nu} u^{-}) \big|_{\partial D} $$ 
for a given $f \in H^{1/2} ( \partial \Omega )$.
For the rest of the paper, we let $\nu$ denote the unit outward normal on the boundaries $\partial D$ and $\partial \Omega$. The `+' notation represents the trace taken from $\Omega \setminus \overline{D}$ and the `$-$' notation represents the trace taken from $D$. Here, the function $u$ is the electrostatic potential for the defective material satisfies the boundary condition with the general Laplace-Beltrami boundary operator 
\begin{equation}\label{lap-bel}
\mathcal{B}(u) = - \nabla_{\partial D} \cdot \mu \nabla_{\partial D}u + \gamma u.
\end{equation}
In the $\mathbb{R}^2$ case, the operator $ \nabla_{\partial D} \cdot \mu \nabla_{\partial D}$ is replaced by the operator $\frac{\text{d}}{\text{d}s} \mu \frac{\text{d}}{\text{d}s}$ where ${\text{d}} / \text{d}s$ is the tangential derivative and $s$ is the arc-length. The generalized Robin condition in \eqref{gen-pde} models the delamination of the defective region $D \subset \Omega$ on its boundary $\partial D$ and states that the jump in current across this boundary is quasi-proportional to the electrostatic potential $u$. The analysis in this paper holds for dimensions $d=2$ and $d=3$. 

We consider the two cases when the boundary parameters $\gamma \in L^{\infty}(\partial D)$ and $\mu_j \in L^{\infty}(\partial D)$ are complex- and real-value where $j=1,\cdots,d-1$. Due to the generalized Robin transmission condition \eqref{lap-bel}, we consider finding the solution $u \in \widetilde{H}^{1}(\Omega)$ to \eqref{gen-pde} for a given $f \in H^{1/2}(\partial \Omega)$ where the solution space is the Hilbert space defined as
$$ \widetilde{H}^{1}(\Omega) = \brac{ u \in H^{1}(\Omega) \quad \text{such that} \quad u \big \rvert_{\partial D} \in H^{1}(\partial D) }$$
equipped with the norm
$$\norm{\varphi}^{2}_{\widetilde{H}^{1}(\Omega)} = \norm{\varphi}^{2}_{H^{1}(\Omega)} + \norm{\varphi}^{2}_{H^{1}(\partial D)}. $$
Since we assume that $u \in \widetilde{H}^{1}(\Omega) \subset H^1 (\Omega)$, it is known that $ [\![u ]\!]  \big|_{\partial D} = 0$. This comes from the fact that any function in $H^1 (\Omega)$ has equal interior trace `$-$' and exterior trace `+' on any subdomain of $\Omega$. We begin by showing that the boundary value problem \eqref{gen-pde} is well-posed for the case when the parameters $\gamma \in L^{\infty} ( \partial D, \mathbb{C})$ and $\mu \in L^{\infty} ( \partial D, \mathbb{C}^{(d-1) \times (d-1)})$ for any given $f \in H^{1/2} ( \partial \Omega)$. For analytical purposes of well-posedness of the direct problem and the analysis of the inverse problem of this case in Section 3, we assume that there exist positive constants $\zeta_1 , \zeta_2 > 0$ such that the real- and imaginary-part of the coefficient $\gamma$ satisfies
\begin{equation}\label{gamma-bounds}
\text{Re}(\gamma) \geq \zeta_1 > 0 \quad \text{and} \quad - \text{Im}(\gamma) \geq \zeta_2 > 0
\end{equation}
for almost every $x \in \partial D$. We also assume that the real- and imaginary-parts of $\mu$ are Hermitian definite matrices where there exist positive constants $\beta_1 , \beta_2 > 0$ such that
such that
\begin{equation}\label{mu-bounds}
 \overline{\xi} \cdot \text{Re}(\mu ) (x) \xi \geq \beta_1 |\xi|^2 \quad \text{and} \quad - \overline{\xi} \cdot \text{Im}(\mu ) (x) \xi \geq \beta_2 |\xi|^2  > 0
\end{equation}
for all $\xi \in \mathbb{C}^{d-1} \setminus \{0\}$ for almost every $x \in \partial D$. We now consider Green's 1st Theorem on the region $\Omega \backslash \overline{D}$ 
$$\int_{\Omega \backslash D} \nabla u \cdot \nabla \overline{\varphi} \, \text{d}x = \int_{\partial \Omega}  \overline{\varphi}  \partial_{\nu} u \, \text{d}s - \int_{D}  \overline{\varphi} \partial_{\nu} u^{+} \, \text{d}s $$
as well as Green's 1st Theorem on the region $D$
$$ \int_{D} \nabla u \cdot \nabla \overline{\varphi} \, \text{d}x = \int_{\partial D}  \overline{\varphi} \partial_{\nu} u^{-} \, \text{d}s $$ 
for any test function $\varphi \in \widetilde{H}^1 (\Omega)$. The variational formulation for \eqref{gen-pde} is given by  adding these two equations
\begin{equation}\label{gen-vf}
\int_{\Omega} \nabla u \cdot \nabla \overline{\varphi} \, \text{d}x =  \int_{\partial \Omega} \overline{\varphi} \partial_{\nu} u  \, \text{d}s +\int_{\partial D} \overline{\varphi}  \nabla_{\partial D} \cdot \mu \nabla_{\partial D} u  \, \text{d}s -  \int_{\partial D} \overline{\varphi} \gamma u  \, \text{d}s
\end{equation}
where we have used the generalized Robin transmission condition on $\partial D$ \eqref{lap-bel}. In order to proceed, we let $u_0 \in H^{1} ( \Omega)$ be the harmonic lifting of the Dirichlet data such that  
\begin{equation}\label{hl}  
- \Delta u_0 = 0 \enspace \text{in} \enspace \Omega \quad \text{with} \quad u_0 \big|_{\partial \Omega} = f.
\end{equation}
We make the ansatz that the solution can be written as $u = v + u_0$ with the function $v \in \widetilde{H}_{0}^{1} (\Omega)$ where we define the space as 
$$\widetilde{H}_{0}^{1} (\Omega) = \brac{ \varphi \in \widetilde{H}^{1} (\Omega) \quad \text{such that} \quad \varphi \big \rvert_{\partial \Omega} = 0 }$$
with the same norm as $\widetilde{H}^{1} (\Omega)$. Thus, the variational formulation of \eqref{gen-pde} with respect to $v$ is given by
\begin{equation} \label{ses}
A(v , \varphi) = - A(u_0 , \varphi) \quad  \text{for all} \quad \varphi \in \widetilde{H}_{0}^{1} (\Omega)
\end{equation}
where the sesquilinear form $A(\cdot , \cdot): \widetilde{H}_{0}^{1} (\Omega) \times \widetilde{H}_{0}^{1} ( \Omega ) \mapsto \mathbb{C}$ is given by 
$$ A(v , \varphi) = \int_{\Omega} \nabla v \cdot \nabla \overline{\varphi} \, \text{d}x + \int_{\partial D} \mu \nabla_{\partial D} v \cdot \nabla_{\partial D} \overline{\varphi} \, \text{d}s + \int_{\partial D} \gamma \, v \, \overline{\varphi} \, \text{d}s.$$
It is clear that the sesqulinear form is bounded whereas the coercivity on $\widetilde{H}^1_0(\Omega)$ can be shown by the assumptions on $\mu$ and $\gamma$ as well as the Poincar\'{e} inequality. We also have that $A(u_0 , \varphi)$ is a conjugate linear and bounded functional acting on  $\widetilde{H}_{0}^{1} (\Omega)$ and using the Trace Theorem just as in \cite{harris2} we have that 
$$| A(u_0 , \varphi ) | \leq C \|{f}\|_{H^{1/2} (\partial \Omega)} \|{\varphi}\|_{\widetilde{H}^{1} (\Omega)}.$$ By the Lax-Milgram lemma, there is a unique solution $v$ to \eqref{ses} satisfying 
$$\|{v}\|_{\widetilde{H}^{1} (\Omega)} \leq C \|{f}\|_{H^{1/2} (\partial \Omega)}.$$ 
Using the sesquilinear form $A(\cdot \,  , \cdot)$, we can show that the solution $u$ for equation \eqref{gen-pde} is unique just as in \cite{harris2}, which implies that equation \eqref{gen-pde} is well-posed. The above analysis gives the following result.

\begin{theorem}\label{gen-soloperator}
The solution operator corresponding to the boundary value problem \eqref{gen-pde} $f \mapsto u$ is a bounded linear mapping from $H^{1/2} ( \partial \Omega)$ to $\widetilde{H}^{1}(\Omega)$.
\end{theorem}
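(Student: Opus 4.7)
The plan is to combine the variational setup already assembled in the discussion preceding the statement with a direct application of the Lax--Milgram lemma. First, I would recall that the harmonic lifting $u_0$ solving \eqref{hl} satisfies $\|u_0\|_{H^1(\Omega)} \leq C \|f\|_{H^{1/2}(\partial \Omega)}$ by standard elliptic theory. Since $u_0$ is harmonic in $\Omega$ and $\text{dist}(\partial \Omega, \overline{D}) > 0$, interior regularity gives $u_0 \in C^\infty$ on a neighborhood of $\overline{D}$, so $u_0\big|_{\partial D} \in H^1(\partial D)$ with norm controlled by $\|f\|_{H^{1/2}(\partial \Omega)}$. Hence $u_0 \in \widetilde{H}^1(\Omega)$ with the required bound, and the problem reduces via the ansatz $u = v + u_0$ to solving \eqref{ses} for $v \in \widetilde{H}^1_0(\Omega)$.

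Boundedness of $A(\cdot,\cdot)$ on $\widetilde{H}^1_0(\Omega) \times \widetilde{H}^1_0(\Omega)$ is a routine application of Cauchy--Schwarz together with the $L^\infty$ bounds on $\mu$ and $\gamma$. The main step is coercivity, which, because the coefficients are complex-valued, I would establish by splitting $A(v,v)$ into real and imaginary parts. Under \eqref{mu-bounds} and \eqref{gamma-bounds}, the Hermitian structure gives
\[ \text{Re}\, A(v,v) \geq \|\nabla v\|_{L^2(\Omega)}^2 + \beta_1 \|\nabla_{\partial D} v\|_{L^2(\partial D)}^2 + \zeta_1 \|v\|_{L^2(\partial D)}^2, \]
and analogously $-\text{Im}\, A(v,v) \geq \beta_2 \|\nabla_{\partial D} v\|_{L^2(\partial D)}^2 + \zeta_2 \|v\|_{L^2(\partial D)}^2$. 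Using the elementary bound $|z| \geq \tfrac{1}{\sqrt{2}} ( |\text{Re}(z)| + |\text{Im}(z)|)$ for $z \in \mathbb{C}$ yields
\[ |A(v,v)| \geq c_0 \left( \|\nabla v\|_{L^2(\Omega)}^2 + \|v\|_{H^1(\partial D)}^2 \right), \]
and the Poincar\'e inequality on $\widetilde{H}^1_0(\Omega)$ (available because $v\big|_{\partial \Omega} = 0$) absorbs $\|v\|_{L^2(\Omega)}^2$ into $\|\nabla v\|_{L^2(\Omega)}^2$, giving $|A(v,v)| \geq c \|v\|_{\widetilde{H}^1(\Omega)}^2$.

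The conjugate-linear functional $\varphi \mapsto -A(u_0, \varphi)$ on $\widetilde{H}^1_0(\Omega)$ is bounded by $C\|f\|_{H^{1/2}(\partial \Omega)} \|\varphi\|_{\widetilde{H}^1(\Omega)}$, as already noted just before the theorem. Lax--Milgram then produces a unique $v \in \widetilde{H}^1_0(\Omega)$ solving \eqref{ses} with $\|v\|_{\widetilde{H}^1(\Omega)} \leq C \|f\|_{H^{1/2}(\partial \Omega)}$; setting $u = v + u_0$ gives a solution of \eqref{gen-pde} in $\widetilde{H}^1(\Omega)$ with the desired estimate, and linearity of $f \mapsto u$ is inherited from the linearity of the PDE. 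The main obstacle is really just the coercivity step: it relies crucially on the Hermitian definiteness of both $\text{Re}(\mu)$ and $\text{Im}(\mu)$, since without this one could not separately sign-control the real and imaginary parts of $A(v,v)$ and the split-and-combine argument would break down.
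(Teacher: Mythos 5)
Your proposal is correct and follows essentially the same route as the paper, which proves this theorem via the discussion preceding it: harmonic lifting $u_0$, the ansatz $u = v + u_0$, boundedness and coercivity of the sesquilinear form $A$ under \eqref{gamma-bounds}--\eqref{mu-bounds} together with the Poincar\'e inequality, and the Lax--Milgram lemma. One small remark: your closing claim that coercivity \emph{crucially} requires the definiteness of $\text{Im}(\mu)$ is an overstatement, since $|A(v,v)| \geq \text{Re}\, A(v,v)$ already yields coercivity from the real-part bounds alone; the sign conditions on $\text{Im}(\mu)$ and $\text{Im}(\gamma)$ are only needed later, for the factorization of $\text{Im}(\Lambda - \Lambda_0)$ in Section \ref{complex-section}.
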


We assume that the voltage $f$ is applied to the outer boundary $\partial \Omega$ and the measured data is given by the current $\partial_{\nu}u$. From the knowledge of the measured currents, we wish to derive a qualitative sampling algorithm to determine the defective region $D$ without the knowledge of the boundary parameters $\mu$ and $\gamma$ and with little to no prior knowledge on the number of regions. To this end, we define the data operator that will be studied in the following sections to derive our algorithms. Note that the function $u_0$ is the electrostatic potential for the healthy material and is known since the outer boundary is known. By the linearity of the partial differential equation and boundary conditions on $\partial \Omega$ and $\partial D$, we have that the voltage to electrostatic potential mappings 
$$ f \longmapsto u \quad \text{and} \quad f \longmapsto u_0$$
are bounded linear operators from $H^{1/2} (\partial \Omega)$ to $\widetilde{H}^{1} (\Omega)$. Note that $u_0 \in \widetilde{H}^{1} (\Omega)$ by interior elliptic regularity. We now define the \textit{Dirichlet-to-Neumann} (DtN) mappings as 
$$ \Lambda \enspace \text{and} \enspace \Lambda_0 : H^{1/2} ( \partial \Omega) \longrightarrow H^{-1/2} ( \partial \Omega)$$ 
where 
$$ \Lambda f = \partial_{\nu} u \big|_{\partial \Omega} \quad \text{and} \quad \Lambda_{0} f = \partial_{\nu} u_0 \big|_{\partial \Omega}.$$
By appealing to Theorem \ref{gen-soloperator} and the well-posedness of \eqref{hl}, we have that the DtN mappings are bounded linear operators by Trace Theorems. Our main goal is to solve the \textit{inverse shape problem} of recovering the boundary $\partial D$ from the knowledge of the difference of the DtN mapping. That is, we want to determine the boundary $\partial D$ from the difference of all possible measurements $(f, \partial_{\nu} u)$ and $(f, \partial_{\nu} u_0)$. The difference of the normal derivatives $\partial_{\nu}u$ and $\partial_{\nu}u_0$ on the outer boundary $\partial \Omega$ is the current gap imposed on the system by the presence of the defective region $D$. By analyzing the data operator $(\Lambda - \Lambda_0)$, we wish to solve the inverse shape problem by deriving a computationally simple algorithm to detect the defective region(s) via the regularized factorization method. Furthermore, we examine the cases where the interior boundary parameters are real-valued and complex-valued. However, we first show how the DtN mapping $\Lambda$ uniquely determines the boundary coefficients.

%%%%%%%%%%%%%%%%%%%%%%%%%%%%%%%%%%
\section{Uniqueness of the Inverse Impedance Problem}\label{uniqueness-section}
In this section, we study the uniqueness of the inverse impedance problem of determining the boundary parameters $\mu$ and $\gamma$ provided that the boundary $\partial D$ is given. We refer to \cite{Heejin1,Haddar1} for some results on the uniqueness of the of the inverse impedance problem for other models. We will establish the uniqueness of the boundary parameters $\mu$ and $\gamma$ based on the knowledge of the DtN operator $\Lambda$. To this end, we first consider the following density result.

\begin{theorem}\label{density}
The set
\begin{align*}
\mathscr{U} = \left\{ u \big |_{\partial D} \in H^1(\partial D) \enspace \text{such that} \enspace u \in \widetilde{H}^1(\Omega) \enspace \text{solves} \enspace \eqref{gen-pde} \enspace \text{for any} \enspace f \in H^{1/2}(\partial \Omega) \right\}
\end{align*}
is dense in $H^1(\partial D)$.
\end{theorem}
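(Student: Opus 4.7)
I plan to prove density by a Hahn--Banach / duality argument: it suffices to show that any continuous linear functional $\psi$ on $H^1(\partial D)$ satisfying $\psi(u|_{\partial D}) = 0$ for every solution $u$ of \eqref{gen-pde} must vanish. My strategy is to encode $\psi$ as the source in an adjoint transmission problem, use the annihilation hypothesis to show that the corresponding adjoint solution $w$ has vanishing Dirichlet and Neumann traces on $\partial\Omega$, and then conclude $w\equiv 0$ by unique continuation, after which the adjoint boundary condition on $\partial D$ forces $\psi=0$.

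The first step is to construct $w\in \widetilde{H}^{1}_{0}(\Omega)$ as the unique solution of
$$A(\varphi, w) = \overline{\psi(\varphi|_{\partial D})} \quad \text{for all } \varphi \in \widetilde{H}^{1}_{0}(\Omega),$$
where $A$ is the sesquilinear form from \eqref{ses}. Well-posedness follows from the Lax--Milgram lemma applied to $A^{*}(u,v):=\overline{A(v,u)}$, which is bounded and coercive under \eqref{gamma-bounds}--\eqref{mu-bounds} by the same estimates used for the direct problem. Integration by parts on $\partial D$ (a closed manifold) and on $\Omega\setminus\partial D$ interprets $w$ as a distributional solution of $-\Delta w = 0$ in $\Omega\setminus\partial D$ with $w|_{\partial\Omega}=0$ and an $H^{-1}(\partial D)$-valued transmission condition on $\partial D$ whose source is (a conjugate of) $\psi$.

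Next I will extract the Cauchy data of $w$ on $\partial\Omega$. For arbitrary $f\in H^{1/2}(\partial\Omega)$, let $u$ solve \eqref{gen-pde} with boundary data $f$ and let $u_0$ be the harmonic lift from \eqref{hl}. The identity $A(u,\varphi) = \int_{\partial\Omega}\overline\varphi\,\partial_\nu u\,\mathrm{d}s$ (valid for every $\varphi\in \widetilde{H}^1(\Omega)$ and obtained from \eqref{gen-vf} after integrating by parts on $\partial D$), combined with $w|_{\partial\Omega}=0$, gives $A(u,w)=0$. Since $u-u_0\in \widetilde{H}^{1}_{0}(\Omega)$, the defining relation for $w$ applied to $\varphi=u-u_0$ together with $\psi(u|_{\partial D})=0$ yields $A(u_0,w) = \overline{\psi(u_0|_{\partial D})}$. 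Independently, computing $A(u_0,w)$ directly by Green's identity on $\Omega\setminus\overline D$ and on $D$ (using $\Delta u_0=0$ in $\Omega$ and $\Delta w=0$ on each piece) and then substituting the adjoint boundary condition produces
$$A(u_0,w) = \int_{\partial\Omega} f\,\partial_\nu \overline w\,\mathrm{d}s + \overline{\psi(u_0|_{\partial D})}.$$
Comparing the two expressions forces $\int_{\partial\Omega} f\,\partial_\nu \overline w\,\mathrm{d}s = 0$ for every $f\in H^{1/2}(\partial\Omega)$, hence $\partial_\nu w|_{\partial\Omega}=0$.

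Finally, $w$ is harmonic in $\Omega\setminus\overline D$ with zero Dirichlet and Neumann traces on $\partial\Omega$, and $\mathrm{dist}(\partial\Omega,\overline D)>0$, so Holmgren's uniqueness theorem gives $w\equiv 0$ in $\Omega\setminus\overline D$. The $H^1(\Omega)$-continuity across $\partial D$ pushes this to $w|_{\partial D}^{-}=0$, and harmonicity in $D$ with vanishing Dirichlet trace yields $w\equiv 0$ in $D$. Inserting $w=0$ into the adjoint transmission condition from Step~1 collapses $\psi$ to $0$, completing the argument. The main obstacle I anticipate is the careful interpretation of the adjoint transmission condition when $\psi$ has only $H^{-1}(\partial D)$-regularity, together with the sign and conjugation bookkeeping coming from the sesquilinearity of $A$; a secondary technical point is confirming that unique continuation propagates to every connected component of $\Omega\setminus\overline D$ when $D$ is disconnected.
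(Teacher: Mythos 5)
Your proposal is correct and follows essentially the same route as the paper: a duality argument reducing density to triviality of the annihilator, an adjoint transmission problem driven by the annihilating functional, vanishing of the Cauchy data of the adjoint solution on $\partial\Omega$, and Holmgren's theorem followed by the interior Dirichlet problem in $D$. The only cosmetic differences are that you construct the adjoint solution variationally via Lax--Milgram (the paper states the adjoint boundary value problem with source $\phi\in\mathscr{U}^{\perp}$ directly) and that you extract $\partial_\nu w|_{\partial\Omega}=0$ by computing $A(u_0,w)$ in two ways rather than by applying Green's second identity to $u$ and the adjoint solution.
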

\begin{proof}
The set $\mathscr{U}$ is a linear subspace of $H^1(\partial D)$ since the mapping $f \mapsto u$ from $H^{1/2}(\partial \Omega)$ to $H^1(\Omega)$ is linear. To show the density of $\mathscr{U}$ in $H^1(\partial D)$, it suffices to show that $\mathscr{U}^\perp$ is trivial. To this end, let $\phi \in \mathscr{U}^\perp$ and $v \in \widetilde{H}^1_0(\Omega)$ be the unique solution to
\begin{align*}
- \Delta v = 0 \enspace \text{in} \enspace \Omega \textbackslash \partial D \quad \text{with}  \quad [\![\partial_\nu v ]\!] \big|_{\partial D} - \overline{\mathcal{B}}(v) =\phi \enspace \text{on} \enspace \partial D,
\end{align*}
where the complex conjugation of $\mathcal{B}$ is given by $\overline{\mathcal{B}}(v) = - \nabla_{\partial D} \cdot \overline{\mu} \nabla_{\partial D} v + \overline{\gamma} v$. Then, for any $f \in H^{1/2}(\partial \Omega)$ we have that the $u$ being the uniques solution to \eqref{gen-pde} satisfies
\begin{align}
0 = \int_{\partial D} u\overline{\phi} \, \text{d}s. \label{denv}
\end{align}
Applying Green's 2nd Theorem in $\Omega \textbackslash \overline{D}$ and $D$, respectively, we have that
$$ 0 = \int_{\partial \Omega} \overline{v} \partial_{\nu} u -u \partial_{\nu} \overline{v} \, \text{d}s  - \int_{\partial D} u [\![\partial_\nu \overline{v} ]\!]  - \overline{v} [\![\partial_\nu u ]\!] \, \text{d}s.  $$
Using the boundary conditions on $\partial D$ and $\partial \Omega$, we have that 
$$ \int_{\partial \Omega} f \partial_\nu \overline{v} \, \text{d}s = -  \int_{\partial D} u \overline{\big(\overline{\mathcal{B}}(v) + \phi \big)}  - \overline{v} {\mathcal{B}}(u)\, \text{d}s.  $$
By appealing to the symmetry of the boundary operator $\mathcal{B}(\cdot)$ we obtain that 
$$\int_{\partial \Omega} f \partial_\nu \overline{v} \, \text{d}s = 0 \quad \text{for any} \enspace f \in H^{1/2}(\partial \Omega)$$
by equation \eqref{denv}. Thus, we have that $\partial_\nu  v \big |_{\partial \Omega} = 0$. Furthermore, since $v \big |_{\partial \Omega}=0$, then by Holmgren's Theorem(see for e.g. \cite{holmgren}), we have that $v = 0$ in $\Omega \textbackslash \overline{D}$. Note that since $\Delta v = 0$ in $D$ with $v^{-} \big |_{\partial D}=0$, then $v$ vanishes in $D$. From the boundary condition, we conclude that $\phi = 0$, which proves the result.
\end{proof}

Now, we can prove that the DtN mapping $\Lambda$ uniquely determines the coefficients $\mu$ and $\gamma$ from the above theorem. Here we assume that $\mu$ is a continuous function on $\partial D$ in addition to our assumptions in Section \ref{dp-ip}. The following theorem is valid for the coefficient parameters $\mu$ and $\gamma$ that can be either real-valued or complex-valued.

\begin{theorem}\label{coeff_uniq}
Assume that $\mu$ and $\gamma$ satisfy the assumptions \eqref{gamma-bounds}--\eqref{mu-bounds}. In addition, assume that $\mu \in C(\partial D)$. Then, the mapping $(\mu, \gamma) \longmapsto \Lambda$ is injective.
\end{theorem}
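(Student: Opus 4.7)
The plan is to combine Holmgren's unique continuation with the density result in Theorem \ref{density}. Suppose two pairs $(\mu_1, \gamma_1)$ and $(\mu_2, \gamma_2)$ both satisfy the standing hypotheses and induce the same DtN mapping $\Lambda$. For a common Dirichlet datum $f \in H^{1/2}(\partial \Omega)$, let $u_j \in \widetilde{H}^1(\Omega)$ be the solution of \eqref{gen-pde} associated with $(\mu_j, \gamma_j)$, and set $w = u_1 - u_2$. Then $w$ is harmonic in $\Omega \setminus \partial D$ and its Cauchy data $(w, \partial_\nu w)$ vanishes on $\partial \Omega$. Since $\text{dist}(\partial \Omega, \overline{D}) > 0$ and $\Omega \setminus \overline{D}$ is connected, Holmgren's theorem yields $w \equiv 0$ in $\Omega \setminus \overline{D}$. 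Because $w \in H^1(\Omega)$ the interior and exterior traces on $\partial D$ coincide, so $w^{-} = w^{+} = 0$ on $\partial D$; harmonicity of $w$ inside $D$ with vanishing Dirichlet trace then forces $w \equiv 0$ throughout $D$. Consequently $u_1 \equiv u_2$ in $\Omega$, and in particular the jumps $[\![\partial_\nu u_j]\!]$ across $\partial D$ coincide.

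Equating the two transmission conditions on $\partial D$ and using $u_1 = u_2$ yields the distributional identity
\begin{equation*}
-\nabla_{\partial D} \cdot (\mu_1 - \mu_2) \nabla_{\partial D} u_1 + (\gamma_1 - \gamma_2) u_1 = 0 \quad \text{on } \partial D.
\end{equation*}
Testing against an arbitrary $\varphi \in H^1(\partial D)$ and integrating by parts on the closed manifold $\partial D$ produces
\begin{equation*}
\int_{\partial D} (\mu_1 - \mu_2) \nabla_{\partial D} u_1 \cdot \nabla_{\partial D} \overline{\varphi} \, \text{d}s + \int_{\partial D} (\gamma_1 - \gamma_2) u_1 \overline{\varphi} \, \text{d}s = 0.
\end{equation*}
Since $f$ is arbitrary, the trace $u_1|_{\partial D}$ ranges over the set $\mathscr{U}$ of Theorem \ref{density}, so by density I may replace $u_1|_{\partial D}$ with any $g \in H^1(\partial D)$.

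To separate the two coefficients, I first take $g \equiv 1$: this kills the gradient term and leaves $\int_{\partial D} (\gamma_1 - \gamma_2) \overline{\varphi} \, \text{d}s = 0$ for every $\varphi \in H^1(\partial D)$. Density of $H^1(\partial D)$ in $L^2(\partial D)$ then forces $\gamma_1 = \gamma_2$ almost everywhere. Substituting back leaves
\begin{equation*}
\int_{\partial D} (\mu_1 - \mu_2) \nabla_{\partial D} g \cdot \nabla_{\partial D} \overline{\varphi} \, \text{d}s = 0 \quad \text{for all } g, \varphi \in H^1(\partial D).
\end{equation*}
Pointwise vanishing of $\mu_1 - \mu_2$ is then extracted by a localization argument at an arbitrary $x_0 \in \partial D$: one chooses $g$ and $\varphi$ supported in a shrinking chart neighborhood of $x_0$ whose tangential gradients prescribe arbitrary tangent vectors $\xi, \eta$ at $x_0$, and uses the continuity of $\mu_1 - \mu_2$ to pass to the limit, obtaining $\overline{\eta} \cdot (\mu_1(x_0) - \mu_2(x_0)) \xi = 0$ for all such $\xi, \eta$, whence $\mu_1(x_0) = \mu_2(x_0)$.

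The principal difficulty is precisely this last pointwise identification of the matrix-valued coefficient $\mu$: the integral identity only delivers a weak bilinear statement, and promoting it to a pointwise equality requires both the density of $\mathscr{U}$ in $H^1(\partial D)$ and, crucially, the continuity hypothesis on $\mu$. By contrast, the recovery of the scalar $\gamma$ is immediate from density in $L^2(\partial D)$, and the preliminary step $u_1 \equiv u_2$ is the standard combination of Holmgren's theorem in the exterior with uniqueness for the interior Dirichlet problem.
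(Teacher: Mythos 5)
Your argument coincides with the paper's through the recovery of $\gamma$: Holmgren plus uniqueness for the interior Dirichlet problem give $u_1 \equiv u_2$ in $\Omega$, the two transmission conditions yield $\nabla_{\partial D}\cdot(\mu_1-\mu_2)\nabla_{\partial D}u_1-(\gamma_1-\gamma_2)u_1=0$ on $\partial D$, the density of $\mathscr{U}$ from Theorem \ref{density} upgrades this to all $g\in H^1(\partial D)$ (the paper phrases this as $\psi\in\mathscr{U}^\perp\Rightarrow\psi=0$, which is equivalent to your continuity-plus-density extension of the bounded bilinear form), and $g\equiv 1$ then forces $\gamma_1=\gamma_2$ a.e. All of this matches the paper's proof.

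The gap is in the final step, which you yourself flag as the principal difficulty. A function $g$ compactly supported in a shrinking chart neighborhood of $x_0$ cannot have tangential gradient identically equal to a fixed nonzero $\xi$ on its support (it would be a nonconstant affine function), so $\nabla_{\partial D}g$ necessarily sweeps through many directions there, and the transition region where the cutoff decays contributes to $\int_{\partial D}\nabla_{\partial D}\overline{\varphi}\cdot(\mu_1-\mu_2)\nabla_{\partial D}g\,\mathrm{d}s$ at the same order as the core. Consequently, "passing to the limit using continuity" does not deliver $\overline{\eta}\cdot(\mu_1-\mu_2)(x_0)\,\xi=0$; the identity only annihilates the pairing against gradient fields, and a priori a nonzero $M=\mu_1-\mu_2$ could survive such averaged tests. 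To close this you need an additional device, e.g. oscillatory test functions $g=\varphi=\chi\,e^{\mathrm{i}\lambda\,\xi\cdot x}$ in a chart: dividing by $\lambda^2$ and invoking Riemann--Lebesgue isolates $\int\chi^2\,\xi\cdot(\mu_1-\mu_2)\xi\,\mathrm{d}s=0$, and the symmetry of $\mathrm{Re}(\mu_i)$ and $\mathrm{Im}(\mu_i)$ then lets you polarize to conclude $\mu_1=\mu_2$. The paper instead sidesteps the localization issue by exploiting the definiteness structure in \eqref{mu-bounds}: it posits that $\mathrm{Re}(\mu_1-\mu_2)$ is sign-definite near $x_0$ and reads off from the diagonal identity $0=-\int\nabla_{\partial D}\overline{\phi}\cdot\mathrm{Re}(\mu_1-\mu_2)\nabla_{\partial D}\phi\geq(\beta_0/2)\int|\nabla_{\partial D}\phi|^2$ that every localized $\phi$ must be constant, a contradiction; a one-sided bound controls $|\nabla_{\partial D}\phi|^2$ without having to prescribe the gradient pointwise (though the paper's "without loss of generality" definiteness of the difference of two definite matrices is itself a debatable reduction). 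Either route can be made to work, but as written your localization step does not go through.
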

\begin{proof}
Given $f \in H^{1/2}(\partial \Omega)$, let $u_i$ be the solution to \eqref{gen-pde} with boundary parameters $(\mu_i, \gamma_i)$ and $\Lambda_i$ be the corresponding DtN operator for each $i=1,2$. Assume that the DtN operators $\Lambda_1$ and $\Lambda_2$ coincide. Then, $\partial_\nu u_1  = \partial_\nu u_2$ and $u_1 = u_2$ on $\partial \Omega$, which implies that $u_1 = u_2$ in $\Omega \setminus \overline{D}$ from Holmgren's Theorem. Moreover, due to the fact the $u_i$ is Harmonic in $D$ we have that $u_1=u_2$ in $D$. From the boundary conditions on $\partial D$, we obtain
\begin{align*}
0=  \left( \nabla_{\partial D} \cdot \mu_1\nabla_{\partial D} u_1 - \gamma_1u_1 \right) - \left( \nabla_{\partial D} \cdot \mu_2\nabla_{\partial D} u_2 - \gamma_2u_2 \right).
\end{align*}
Since there are no jumps of the trace of $u_i$ across the boundary $\partial D$, we have that
\begin{align*}
0  =\nabla_{\partial D} \cdot (\mu_1-\mu_2)\nabla_{\partial D} u_1 - (\gamma_1-\gamma_2)u_1.
\end{align*}
For any $\phi \in H^1(\partial D)$, consider a function $\psi$ in the dual space of $H^{-1}(\partial D)$ such that
\begin{align*}
\psi := \nabla_{\partial D} \cdot (\mu_1 - \mu_2) \nabla_{\partial D} \overline{\phi} - (\gamma_1-\gamma_2) \overline{\phi}.
\end{align*}
Then,
\begin{align*}
 \int_{\partial D} \psi u_1 \, \text{d}s
%=  \int_{\partial D} \left( \nabla_{\partial D} \cdot (\mu_1-\mu_2)\nabla_{\partial D} \overline{\phi} - (\gamma_1-\gamma_2)\overline{\phi} \right) u_1  \, \text{d}s \\
=  \int_{\partial D} \left( \nabla_{\partial D} \cdot (\mu_1-\mu_2)\nabla_{\partial D} u_1 - (\gamma_1-\gamma_2)u_1 \right)\overline{\phi}  \, \text{d}s = 0.
\end{align*}
Therefore, $\psi \in \mathscr{U}^\perp$ and from Theorem \ref{density}, we have $\psi = 0$. That is 
\begin{align}\label{coeff}
\nabla_{\partial D} \cdot (\mu_1 - \mu_2) \nabla_{\partial D} \phi - (\gamma_1-\gamma_2) \phi=0, \quad \forall \phi \in H^1(\partial D).
\end{align}
If we take $\phi = 1$, we have $\gamma_1 = \gamma_2$ a.e. on $\partial D$.

It remains to show that $\mu_1 = \mu_2$. From \eqref{coeff}, we have that for any $\phi \in H^1(\partial D)$, 
\begin{align}\label{eqmu}
0 = \int_{\partial D} \overline{\phi}  \left(\nabla_{\partial D} \cdot (\mu_1 - \mu_2) \nabla_{\partial D} \phi \right) \, \text{d}s
=-\int_{\partial D}  \nabla_{\partial D} \overline{\phi} \cdot (\mu_1-\mu_2)\nabla_{\partial D} \phi \, \text{d}s.
\end{align}
If $\mu_1(x_0) \neq \mu_2(x_0)$ for some $x_0 \in \partial D$, without loss of generality, we have that
\begin{align*}
 - \overline{\xi} \cdot \text{Re}(\mu_1 -\mu_2 ) (x_0) \xi \geq \beta_0 |\xi|^2, \quad \forall \xi \in \mathbb{C}^{d-1}
\end{align*}
for some positive constant $\beta_0$. Since $\mu_1$ and $\mu_2$ are continuous, there exists a ball $B_\varepsilon(x_0)$ of radius $\varepsilon>0$ centered at $x_0$ such that 
\begin{align*}
-  \overline{\xi} \cdot \text{Re}(\mu_1 - \mu_2 ) (x) \xi \geq \beta_0/2 |\xi|^2 \quad \forall x \in \partial D \, \cap \, B_\varepsilon(x_0).
\end{align*}
Consider a smooth function $\phi$ compactly supported in $ \partial D \, \cap \, B_\varepsilon(x_0)$. By taking the real part of \eqref{eqmu}, we have that
\begin{align*}
0 = \int_{\partial D} - \nabla_{\partial D} \overline{\phi} \cdot \text{Re}(\mu_1-\mu_2)\nabla_{\partial D} \phi \, \text{d}s
 \geq \int_{\partial D \, \cap \, B_\epsilon(x_0)} \beta_0/2 | \nabla_{\partial D} \phi |^2,
\end{align*}
which implies that $\nabla_{\partial D} \phi = 0$ on $\partial D \, \cap \, B_\epsilon(x_0)$. Therefore, $\phi$ is a constant on $\partial D \cap B_\epsilon(x_0)$, which is a contradiction. Thus, $\mu_1 = \mu_2$ on $\partial D$.
\end{proof}

In the following sections, we consider two separate cases where the interior boundary coefficients are real-valued and complex-valued. In both instances, we rigorously demonstrate a factorization of an operator derived from $\Lambda$. With these factorizations, we develop range-based identities in order to recover the unknown, extended region $D$.

%%%%%%%%%%%%%%%%{%%%%%%%%%%%%%%%%%%%%%%%%%%%%%%%%%%%%%%%%%%%
\section{\textbf{Reconstruction for the Inverse Shape Problem}}\label{invshapeG-EIT}

%%%%%%%%%%%%%%%%{%%%%%%%%%%%%%%%%%%%%%%%%%%%%%%%%%%%%%%%%%%%
\subsection{\textbf{Complex--valued boundary coefficients}}\label{complex-section}
In this section, we study the case when the boundary parameters $\mu$ and $\gamma$ are complex-valued. The methodology used here is influenced by the work in \cite{harris2}. The analysis is based on the factorization of the current gap operator $(\Lambda - \Lambda_0)$. The goal is to derive an imaging functional using the singular value decomposition of the known current gap operator.

We begin by defining the auxiliary operator that will be used to derive our sampling method. To this end, for a given $h \in H^{-1}(\partial D)$, we define $w \in \widetilde{H}^{1}_{0}( \Omega)$ to be the unique solution of the adjoint problem to \eqref{gen-pde} given by 
\begin{equation}\label{adj-prob}
- \Delta w = 0 \quad \text{in} \quad \Omega \backslash \partial D \qquad \text{with} \qquad [\![\partial_\nu w ]\!] \big \rvert_{\partial D} = \overline{\mathcal{B}}(w) + h \quad \text{on} \quad \partial D,
\end{equation}
where the overline denotes complex conjugation of the coefficients. Just as in the previous section, one can show that \eqref{adj-prob} is well-posed by appealing to a variational argument. Thus, we can define the bounded linear operator
\begin{equation}\label{adj-opF}
F: H^{-1}(\partial D) \rightarrow H^{-1/2} (\partial \Omega) \quad \text{given by} \quad Fh = \partial_{\nu} w \big \rvert_{\partial \Omega}, 
\end{equation}
where $w$ is the unique solution to \eqref{adj-prob}. We proceed by studying some important properties of the operator $F$ which will be useful in our sampling algorithm.

\begin{theorem}\label{F-comp-inj}
The operator $F$ defined in \eqref{adj-opF} is compact and injective.
\end{theorem}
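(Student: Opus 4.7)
My plan is to handle the two properties separately, using well-posedness of the adjoint problem \eqref{adj-prob} (which parallels the well-posedness of \eqref{gen-pde} established in Theorem \ref{gen-soloperator}) to get a bounded solution operator $h\mapsto w$ from $H^{-1}(\partial D)$ into $\widetilde{H}^1_0(\Omega)$, and then exploiting the geometric fact that $\text{dist}(\partial\Omega,\overline{D})>0$.

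For compactness, the idea is to factor $F$ through a strictly smaller intermediate space. Pick an open neighborhood $\Omega_\delta \subset \Omega\setminus\overline{D}$ of $\partial\Omega$. On $\Omega_\delta$ the function $w$ is harmonic, and on the outer piece of its boundary it satisfies $w|_{\partial\Omega}=0$. Since $\partial\Omega$ is $\mathcal{C}^2$, standard boundary regularity for harmonic functions upgrades $w$ to $H^2$ in a one-sided neighborhood of $\partial\Omega$, with
$$\|w\|_{H^2(\Omega_\delta')} \le C\|w\|_{H^1(\Omega_\delta)} \le C\|h\|_{H^{-1}(\partial D)}.$$
Consequently $\partial_\nu w|_{\partial\Omega} \in H^{1/2}(\partial\Omega)$ with a continuous dependence on $h$. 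Composing the bounded map $F:H^{-1}(\partial D) \to H^{1/2}(\partial\Omega)$ with the compact embedding $H^{1/2}(\partial\Omega)\hookrightarrow H^{-1/2}(\partial\Omega)$ yields compactness of $F$ as stated.

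For injectivity, suppose $Fh = 0$, so $\partial_\nu w|_{\partial\Omega} = 0$. Since $w\in \widetilde{H}^1_0(\Omega)$ also has $w|_{\partial\Omega}=0$, Holmgren's Theorem applied to $\Delta w = 0$ in $\Omega\setminus\overline{D}$ forces $w \equiv 0$ on $\Omega\setminus\overline{D}$. Because $w\in H^1(\Omega)$ the trace across $\partial D$ has no jump, so the interior trace $w^-|_{\partial D}=0$; combined with $\Delta w = 0$ in $D$ and uniqueness for the interior Dirichlet problem this gives $w\equiv 0$ in $D$ as well. Plugging this into the jump condition in \eqref{adj-prob} and using that $\overline{\mathcal{B}}(w) = -\nabla_{\partial D}\cdot\overline{\mu}\nabla_{\partial D}w + \overline{\gamma}w$ vanishes when $w|_{\partial D}=0$, we obtain $0 = [\![\partial_\nu w]\!]|_{\partial D} = \overline{\mathcal{B}}(w) + h = h$.

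The mildly delicate step is the compactness argument, specifically justifying the $H^2$-boundary regularity up to $\partial\Omega$ for the harmonic function $w$ with zero Dirichlet data; the key point is that $\overline{D}$ stays a positive distance away from $\partial\Omega$, so the nonstandard transmission condition on $\partial D$ never touches the regularity analysis near $\partial\Omega$. The injectivity argument is essentially bookkeeping once one notices that $w=0$ on $\partial D$ kills the $\overline{\mathcal{B}}(w)$ contribution in the jump condition.
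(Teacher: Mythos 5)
Your proposal is correct and follows essentially the same route as the paper: $H^2$ elliptic regularity for $w$ in a neighborhood of $\partial\Omega$ bounded away from $\overline{D}$, followed by the Trace Theorem and the compact embedding $H^{1/2}(\partial\Omega)\hookrightarrow H^{-1/2}(\partial\Omega)$ for compactness, and Holmgren's Theorem plus uniqueness of the interior Dirichlet problem to force $w\equiv 0$ and hence $h=0$ for injectivity. No substantive differences from the paper's argument.
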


\begin{proof}
We begin by showing compactness. Since $\text{dist}( \partial \Omega , \overline{D} ) > 0$, there exists a region $\Omega^{*}$ such that $D \subset \Omega^{*} \subset \Omega$ where $\Omega^{*}$ is $\mathcal{C}^2$--smooth. By standard elliptic regularity results(see for e.g. \cite{evans}), we have that the solution $w$ to \eqref{adj-prob} is in $H^{2}(\Omega \backslash \Omega^{*})$ for any $h \in H^{-1}(\partial D)$.  The Trace Theorem implies that $Fh \in H^{1/2}(\partial \Omega)$. Thus, the compact embedding of $H^{1/2}(\partial \Omega)$ into $H^{-1/2}(\partial \Omega)$ proves that $F$ is compact.

To prove injectivity, assume that $h \in Null(F)$. This implies that the solution $w$ to \eqref{adj-prob} with boundary data $h$ on $\partial D$ has zero Cauchy data on $\partial \Omega$. By Holmgren's Theorem, we have that $w=0$ in $\Omega \backslash \overline{D}$. This implies that $w^{+}\big \rvert_{\partial D} =w^{-} \big \rvert_{\partial D} = 0$ and that $\Delta w=0$ in $D$. Therefore, we have that $w = 0$ in $D$ which implies that $[\![\partial_\nu w ]\!] \big \rvert_{\partial D} =\overline{\mathcal{B}}({w})= 0$ on $\partial D$, proving $h = 0$.
\end{proof}

To proceed, we define the sesquilinear dual--product on the closed curve/surface $\Gamma$ as
\begin{equation}\label{out-dualprod}
\langle \varphi , \psi \rangle_{\Gamma} = \int_{\Gamma} \varphi \, \overline{\psi} \, \text{d}s \quad \text{for all} \quad \varphi \in H^{p} ( \Gamma ) \quad \text{and} \quad \psi \in H^{-p} ( \Gamma )
\end{equation}
between the Hilbert Space $H^{p}(\Gamma )$ and its dual space $H^{-p} ( \Gamma )$ for $p>0$ where $L^2 ( \Gamma)$ is the Hilbert pivot space. Recall, that we have the following
$$ H^{1} ( \Gamma ) \subset H^{1/2} ( \Gamma) \subset L^2( \Gamma) \subset H^{-1/2} ( \Gamma ) \subset H^{-1} ( \Gamma )$$ 
with dense inclusions. In this paper, we are particularly interested in the cases when $\Gamma = \partial D$ and $\Gamma = \partial \Omega$. These dual-products will also be used in the upcoming sections. In the analysis of this section, we will need the adjoint operator of $F$ with respect to the sesquilinear forms $\langle \cdot , \cdot \rangle_{\partial \Omega} $ and $ \langle \cdot , \cdot \rangle_{\partial D}$ which is given by the following theorem.

\begin{theorem}\label{f-adj}
The adjoint operator 
$$F^{*} : H^{1/2}(\partial \Omega) \rightarrow H^{1}(\partial D) \quad \text{is given by} \quad F^{*}f = u \big \rvert_{\partial D}. $$
Moreover, $F^{*}$ is injective, i.e. $F$ has dense range.
\end{theorem}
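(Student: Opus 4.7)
My plan is to identify $F^*$ by computing the sesquilinear dual pairing $\langle Fh, f\rangle_{\partial \Omega}$ for arbitrary $f \in H^{1/2}(\partial \Omega)$ and $h \in H^{-1}(\partial D)$. Let $u \in \widetilde{H}^{1}(\Omega)$ be the solution of \eqref{gen-pde} with Dirichlet data $f$, and let $w \in \widetilde{H}^{1}_{0}(\Omega)$ be the solution of the adjoint problem \eqref{adj-prob} with data $h$, so that by definition $Fh = \partial_\nu w |_{\partial \Omega}$. I would apply Green's second identity to the pair $(u, w)$ separately on $\Omega \setminus \overline{D}$ and on $D$, using that both functions are harmonic in these subdomains, and combine the resulting identities. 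Since the traces $u^{\pm}$ and $w^{\pm}$ agree across $\partial D$ (both functions lie in $H^{1}(\Omega)$) and $w$ vanishes on $\partial \Omega$, the combined identity reduces to
\begin{equation*}
\int_{\partial \Omega} \partial_\nu w \, \overline{f} \, \text{d}s \;=\; \int_{\partial D} \Big( [\![\partial_\nu w ]\!]\, \overline{u} \;-\; w\, \overline{[\![\partial_\nu u ]\!]} \Big)\, \text{d}s.
\end{equation*}

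I would then substitute the two transmission conditions $[\![\partial_\nu u ]\!] = \mathcal{B}(u)$ and $[\![\partial_\nu w]\!] = \overline{\mathcal{B}}(w) + h$ into the right-hand side. Integrating by parts along the closed surface $\partial D$, which produces no boundary contributions, and invoking the formal symmetry of the generalized Laplace--Beltrami operator $\mathcal{B}$ exactly as in the proof of Theorem \ref{density}, the terms involving $\mathcal{B}$ and $\overline{\mathcal{B}}$ cancel and only the forcing survives. This yields
\begin{equation*}
\langle Fh, f\rangle_{\partial \Omega} \;=\; \int_{\partial D} h\, \overline{u}\, \text{d}s \;=\; \langle h,\, u|_{\partial D}\rangle_{\partial D},
\end{equation*}
so by the definition of the adjoint relative to the pairings \eqref{out-dualprod} we conclude $F^* f = u|_{\partial D}$. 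Boundedness as a map into $H^{1}(\partial D)$ is inherited from Theorem \ref{gen-soloperator}, which guarantees $u \in \widetilde{H}^{1}(\Omega)$ and hence $u|_{\partial D} \in H^{1}(\partial D)$.

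For the second assertion, density of the range of $F$ is equivalent to injectivity of $F^*$. Suppose $F^* f = u|_{\partial D} = 0$. Since $u$ is harmonic in $D$ with vanishing trace on $\partial D$, uniqueness for the Dirichlet problem gives $u \equiv 0$ in $D$, so both the interior trace and the interior normal derivative of $u$ vanish on $\partial D$. Because $u|_{\partial D} = 0$ also forces $\mathcal{B}(u) = 0$, the transmission condition then yields $\partial_\nu u^{+} = 0$ on $\partial D$ as well. Thus $u$ is harmonic in $\Omega \setminus \overline{D}$ with zero Cauchy data on the interior boundary $\partial D$, and Holmgren's Theorem gives $u \equiv 0$ on $\Omega \setminus \overline{D}$, whence $f = u|_{\partial \Omega} = 0$. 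The main subtlety I anticipate is the cancellation step in the second paragraph; this is precisely the same integration-by-parts identity that motivated the choice of conjugated coefficients $\overline{\mu}$ and $\overline{\gamma}$ in the adjoint problem \eqref{adj-prob}, and the computation is essentially the one already executed inside the proof of Theorem \ref{density}.
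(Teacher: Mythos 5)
Your proposal is correct and follows essentially the same route as the paper: Green's second identity on $\Omega \setminus \overline{D}$ and $D$, substitution of the two transmission conditions, cancellation of the $\mathcal{B}$ and $\overline{\mathcal{B}}$ terms via the symmetry of the Laplace--Beltrami operator to isolate $\int_{\partial D} h\,\overline{u}\,\mathrm{d}s$, and then the identical Holmgren argument for injectivity of $F^{*}$. The only differences are cosmetic (which slot of the sesquilinear pairing you conjugate), so no further comparison is needed.
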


\begin{proof}
We begin by applying Green's second theorem to the solution $u$ of \eqref{gen-pde} and the solution $w$ to the adjoint problem \eqref{adj-prob} on the regions $\Omega \backslash \overline{D}$ and $D$ in order to obtain
$$ 0 = \int_{\partial \Omega}  \overline{w} \partial_{\nu} u - u \partial_{\nu} \overline{w} \, \text{d}s  - \int_{\partial D} u [\![\partial_\nu \overline{w} ]\!]  - \overline{w} [\![\partial_\nu u ]\!] \, \text{d}s.  $$
By applying the boundary conditions on $\partial \Omega$ and $\partial D$, we obtain
$$\int_{\partial \Omega} f \partial_{\nu} \overline{w} \, \text{d}s = \int_{\partial D} u \big(\overline{h + \overline{\mathcal{B}}(w)}\big)  - \overline{w} \mathcal{B}(u) \, \text{d}s = \int_{\partial D} u \overline{h} \, \text{d}s .$$
The above equality implies that $F^{*}f = u \big \rvert_{\partial D}$ since \eqref{out-dualprod} implies that
$$\int_{\partial \Omega} f \partial_{\nu} \overline{w} \, \text{d}s= \langle f , Fh \rangle_{\partial \Omega} = \langle F^{*}f , h \rangle_{\partial D} = \int_{\partial D} u \overline{h} \, \text{d}s $$
which proves the first part of our result.

To show that $F^{*}$ is injective, suppose that $f \in Null(F^{*})$. Then we have that $\mathcal{B}(u) = 0$, which implies that $u$ is the unique solution to the Dirichlet problem on $D$ with zero Dirichlet data. Thus, $u = 0$ in $D$. Furthermore, the generalized Robin boundary condition 
$$[\![\partial_\nu u ]\!] \big \rvert_{\partial D} = \mathcal{B}(u) \,\, \text{on $\partial D$} \quad  \text{implies that} \quad  \partial_{\nu} u  \big \rvert^{+}_{\partial D} = u \big \rvert^{+}_{\partial D} = 0.$$
 Note that $u$ is harmonic on $\Omega \backslash \overline{D}$ with zero Cauchy data on $\partial D$. Using Holmgren's Theorem and the Trace Theorem, we have that $f = 0$. Thus, $F^{*}$ is injective, which implies that $F$ has dense range(see for e.g. \cite{mclean-book}).
\end{proof}

Sampling methods typically connect the region of interest to an ill-posed equation involving the data operator. In the two cases we are considering, we will use a singular solution to the background problem, i.e. the equation where the region of the interest is not present. Using the singularity of the solution to the background problem, one can show that an associated ill-posed problem is solvable if and only if the singularity is contained in the region of interest. To this end, we define the Dirichlet Green's function for the negative Laplacian for the known domain $\Omega$ as $\mathbb{G}(\cdot , z) \in H^{1}_{loc}(\Omega \backslash \brac{z})$, for $z \in \Omega$, be the unique solution to the boundary value problem
$$- \Delta \mathbb{G}(\cdot , z) = \delta (\cdot , z) \quad \text{in} \quad \Omega \qquad \text{and} \qquad \mathbb{G}(\cdot ,z) \big \rvert_{\partial \Omega} = 0 .$$
The following result shows that $Range(F)$ uniquely determines the region $D$.

\begin{theorem}\label{adj-complex-case}
The operator $F$ defined in \eqref{adj-opF} is such that 
$$\partial_{\nu} \mathbb{G}(\cdot , z) \in Range(F) \quad \text{if and only if} \quad z \in D.$$
\end{theorem}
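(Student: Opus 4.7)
The plan is to prove both directions by relating the adjoint solution $w$ to the Dirichlet Green's function via the difference $v := w - \mathbb{G}(\cdot,z)$, which is harmonic away from $\partial D \cup \{z\}$ and has zero Cauchy data on $\partial\Omega$ whenever $Fh = \partial_\nu \mathbb{G}(\cdot,z)\big|_{\partial\Omega}$. The sufficient direction will be a direct construction; the necessary direction will be a Holmgren-style unique continuation argument exploiting the pole of $\mathbb{G}$ at $z$.

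\textbf{Sufficient direction ($z \in D$).} Because $\mathrm{dist}(\partial\Omega,\overline{D}) > 0$ and $z \in D$, the Green's function $\mathbb{G}(\cdot,z)$ is harmonic and $C^{\infty}$ in an open neighborhood of $\overline{\Omega \setminus \overline{D}}$ and vanishes on $\partial\Omega$. Let $\widetilde{w} \in H^1(D)$ be the unique harmonic function in $D$ with Dirichlet trace $\mathbb{G}(\cdot,z)^{+}\big|_{\partial D}$, and set
\[
w = \mathbb{G}(\cdot,z) \ \text{in} \ \Omega \setminus \overline{D}, \qquad w = \widetilde{w} \ \text{in} \ D.
\]
The interior and exterior traces on $\partial D$ coincide, so $w \in H^{1}(\Omega)$; the smoothness of both pieces up to $\partial D$ places $w\big|_{\partial D}$ in $H^{1}(\partial D)$, so $w \in \widetilde{H}^{1}_{0}(\Omega)$. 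Setting $h := [\![\partial_\nu w]\!]\big|_{\partial D} - \overline{\mathcal{B}}(w) \in H^{-1}(\partial D)$ makes $w$ solve the adjoint problem \eqref{adj-prob} with datum $h$, and by construction $Fh = \partial_\nu w\big|_{\partial\Omega} = \partial_\nu \mathbb{G}(\cdot,z)\big|_{\partial\Omega}$.

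\textbf{Necessary direction ($z \notin D$).} Assume for contradiction that $\partial_\nu \mathbb{G}(\cdot,z)\big|_{\partial\Omega} = Fh$ for some $h \in H^{-1}(\partial D)$, and let $w \in \widetilde{H}^{1}_{0}(\Omega)$ be the associated adjoint solution. Then $w$ and $\mathbb{G}(\cdot,z)$ share zero Dirichlet and equal Neumann data on $\partial\Omega$, so $v = w - \mathbb{G}(\cdot,z)$ is harmonic in $(\Omega \setminus \overline{D}) \setminus \{z\}$ with zero Cauchy data on $\partial\Omega$. This set is open, connected, and meets $\partial\Omega$, so Holmgren's theorem forces $v \equiv 0$ throughout. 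If $z \in \Omega \setminus \overline{D}$, this identity would make $w$ inherit the fundamental singularity of $\mathbb{G}$ at $z$, contradicting $w \in H^{1}_{\mathrm{loc}}$ near $z$. If $z \in \partial D$, the identity $w^{+}\big|_{\partial D} = \mathbb{G}(\cdot,z)\big|_{\partial D}$ would force the tangential trace of $w$ to be singular at $z$, contradicting $w\big|_{\partial D} \in H^{1}(\partial D)$ inherent in $\widetilde{H}^{1}_{0}(\Omega)$.

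\textbf{Main obstacle.} The chief technical point is the sufficient direction: one must verify that the piecewise definition of $w$ truly yields an element of $\widetilde{H}^{1}_{0}(\Omega)$ and that the resulting $h = [\![\partial_\nu w]\!] - \overline{\mathcal{B}}(w)$ is a legitimate element of $H^{-1}(\partial D)$. Both hinge on the $\mathcal{C}^2$-smoothness of $\partial D$ together with the separation of $z$ from $\partial D$, which guarantee that $\mathbb{G}(\cdot,z)\big|_{\partial D}$ is smooth and that the harmonic extension into $D$ is regular up to the boundary. The boundary case $z \in \partial D$ in the necessary direction is a secondary subtlety, since there one must convert the interior pole of $\mathbb{G}$ into a failure of the surface $H^{1}(\partial D)$-regularity of the trace rather than into a pointwise blow-up inside $\Omega$.
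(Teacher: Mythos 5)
Your proposal is correct and follows essentially the same route as the paper: the same piecewise construction (Green's function outside $D$ glued to its harmonic extension inside $D$) for the sufficient direction, and the same Holmgren-plus-singularity contradiction for the necessary direction. The only difference is that you also treat the boundary case $z \in \partial D$ via failure of $H^{1}(\partial D)$-regularity of the trace, a case the paper's proof leaves implicit.
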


\begin{proof}
To prove the claim, we first assume that $z \in \Omega \backslash \overline{D}$. Suppose by contradiction that $\partial_{\nu} \mathbb{G}(\cdot , z) \in Range (F)$, i.e. there exists $h_z \in H^{-1}(\partial D)$ such that $F h_z = \partial_{\nu} \mathbb{G}(\cdot , z) \big \rvert_{\Omega}$. This implies that there exists $v_z \in \widetilde{H}^{1}_{0}(\Omega)$ such that
$$-\Delta v_z = 0 \enspace \text{in} \enspace \Omega \backslash \partial D \quad \text{with} \quad [\![ \partial_{\nu} v_z ]\!] \big \rvert_{\partial D} = \overline{\mathcal{B}}({v}_z) + h_z  \,\, \text{on $\partial D$}.$$
Furthermore, $\partial_{\nu}v_z \big \rvert_{\partial \Omega} = \partial_{\nu} \mathbb{G}(\cdot , z) \big \rvert_{\partial \Omega}$ and we have that $v_z$ satisfies
$$-\Delta v_z = 0 \enspace \text{in} \enspace \Omega \backslash \partial D \quad \text{with} \quad v_z \big \rvert_{\partial \Omega} = 0 \quad \text{and} \quad \partial_{\nu}v_z \big \rvert_{\partial \Omega} = \partial_{\nu} \mathbb{G}(\cdot , z) \big \rvert_{\partial \Omega}. $$
So we define $V_z = v_z - \mathbb{G}(\cdot , z)$ and note that
$$- \Delta V_z = 0 \enspace \text{in} \enspace \Omega \backslash (\overline{D} \cup \brac{z}) \quad \text{with} \quad V_z \big \rvert_{\partial \Omega} = 0 \quad \text{and} \quad \partial_{\nu} V_z \big \rvert_{\partial \Omega} = 0.$$
By Holmgren's Theorem \cite{holmgren}, we can conclude that $v_z = \mathbb{G}(\cdot , z)$ in $\Omega \backslash (\overline{D} \cup \brac{z})$. By interior elliptic regularity, $v_z$ is continuous at $z \in \Omega \backslash \overline{D}$. However, $\mathbb{G}(\cdot ,z)$ has a singularity at $z$, which proves the claim by contradiction due to the fact that
$$|v_z (x)| < \infty \quad \text{whereas} \quad |\mathbb{G}(\cdot ,z)| \rightarrow \infty \enspace \text{as} \enspace x \rightarrow z.$$
Conversely, we now assume that $z \in D$ and let $\eta \in H^{1}(D)$ be the solution to the following Dirichlet problem in $D$
$$- \Delta \eta = 0 \enspace \text{in} \enspace D \quad \text{with} \quad \eta \big \rvert_{\partial D}^{-} = \mathbb{G}(\cdot ,z)\big \rvert_{\partial D}^{+}. $$
Now, define $v_z$ such that 
\[ v_z = \begin{cases} 
          \mathbb{G}( \cdot \, , z) & \text{in} \enspace \Omega \textbackslash \overline{D}\\
          \eta & \text{in} \enspace D
       \end{cases}
    \]
and we will show that $v_z$ satisfies \eqref{adj-prob} for some $h_z \in H^{-1}(\partial D)$. By definition, we have that $v_z$ is harmonic  in $\Omega \backslash \partial D$. Note that since $z \in D$, then $\mathbb{G}(\cdot , z) \in H^2(\Omega \backslash \overline{D})$ which implies that $v_z \in \tilde{H}^{1}_{0} (\Omega)$. By construction, we have that $\partial_{\nu} v_z \big \rvert_{\partial \Omega} = \partial_{\nu} \mathbb{G}(\cdot , z) \big \rvert_{\partial \Omega}$. Now, we need to prove that
$$h_z = [\![\partial_\nu v_z ]\!] \big \rvert_{\partial D} - \overline{\mathcal{B}}({v}_z) \quad \text{on $\partial D$}$$
is in $H^{-1}(\partial D)$. Notice that 
$$ [\![\partial_\nu v_z ]\!] \big \rvert_{\partial D} = \partial_{\nu} \mathbb{G}(\cdot , z) \big \rvert_{\partial D}^{+} - \partial_{\nu} \eta \big \rvert_{\partial D}^{-} .$$
Therefore, we have that $\mathbb{G}(\cdot , z) \big \rvert_{\partial D}^{+} \in H^{3/2}(\partial D)$ which implies that $\eta \in H^{2}(D)$ by appealing to elliptic regularity. By the Neumann Trace Theorem, we obtain that 
$$ [\![\partial_\nu v_z ]\!] \big \rvert_{\partial D} \in H^{1/2}(\partial D) \subset H^{-1}(\partial D) .$$
Also, it is clear that $\overline{\mathcal{B}}({v}_z) \in H^{-1}(\partial D)$ since the trace of $v_z$ on $\partial D$ is in $H^1(\partial D)$. Thus, we can conclude that $h_z \in H^{-1}(\partial D)$. By the definition of the operator $F$, we have that $F h_z = \partial_{\nu} \mathbb{G}(\cdot , z) \big \rvert_{\partial \Omega}$, proving the claim.
\end{proof}

We have shown that the operator $F$ uniquely determines the region of interest $D$. Our next task is to connect the range of $F$ to the range of an operator derived from $(\Lambda - \Lambda_0)$. The following result will provide important properties of the Direchlet-to-Neumann mapping which will be used in our sampling method.

\begin{theorem}\label{cgo-comp-id}
The current gap operator defined by $(\Lambda - \Lambda_0)f = \partial_{\nu}(u - u_0) \big \rvert_{\partial \Omega}$ where $u$ and $u_0$ are solutions to \eqref{gen-pde} and \eqref{hl}, respectively, is compact. Moreover, we have the identity
$$\langle f, (\Lambda - \Lambda_0)f \rangle_{\partial \Omega} = \int_{\Omega} |\nabla u|^2 \, \text{d}x + \int_{\partial D} \overline{\mu} |\nabla_{\partial D} u|^2 + \overline{\gamma} |u|^2 \, \text{d}s - \int_{\Omega} |\nabla u_0 |^2 \, \text{d}x $$ 
\end{theorem}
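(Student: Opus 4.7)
The plan has two independent parts, corresponding to the two claims in the theorem.

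For compactness, the idea is to exploit the fact that $u-u_0$ is harmonic throughout $\Omega\setminus\overline{D}$ with zero Dirichlet trace on $\partial\Omega$, so it has a \emph{better} normal trace on $\partial\Omega$ than a generic DtN image. Concretely, as in the proof of Theorem \ref{F-comp-inj}, I would pick an auxiliary smooth surface $\Omega^{*}$ with $D\subset\Omega^{*}\subset\Omega$ and $\dist(\partial\Omega,\overline{\Omega^{*}})>0$. Standard interior elliptic regularity applied to the harmonic function $u-u_0$ on the annular region $\Omega\setminus\overline{\Omega^{*}}$ then yields $u-u_0\in H^{2}(\Omega\setminus\overline{\Omega^{*}})$, so the Trace Theorem gives $\partial_\nu(u-u_0)\big|_{\partial\Omega}\in H^{1/2}(\partial\Omega)$. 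Since the mapping $f\mapsto u-u_0$ is bounded from $H^{1/2}(\partial\Omega)$ to $\widetilde H^{1}(\Omega)$ by Theorem \ref{gen-soloperator} and the well-posedness of \eqref{hl}, a standard a priori estimate upgrades this into boundedness from $H^{1/2}(\partial\Omega)$ into $H^{1/2}(\partial\Omega)$. Composing with the compact embedding $H^{1/2}(\partial\Omega)\hookrightarrow H^{-1/2}(\partial\Omega)$ proves the compactness of $\Lambda-\Lambda_0$.

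For the identity, I would apply Green's first theorem to $u$ against the test function $\bar u$ in each of the subdomains $\Omega\setminus\overline{D}$ and $D$. Adding the two integrations by parts and using harmonicity of $u$ in $\Omega\setminus\partial D$ gives
\[
\int_{\Omega}|\nabla u|^{2}\,\text{d}x \;=\; \int_{\partial\Omega} u\,\partial_\nu\bar u\,\text{d}s \;-\; \int_{\partial D} u\,[\![\partial_\nu\bar u]\!]\,\text{d}s.
\]
Now, since $[\![\partial_\nu u]\!]\big|_{\partial D}=\mathcal{B}(u)=-\nabla_{\partial D}\cdot\mu\nabla_{\partial D}u+\gamma u$, complex conjugation gives $[\![\partial_\nu\bar u]\!]=-\nabla_{\partial D}\cdot\overline{\mu}\nabla_{\partial D}\bar u+\overline{\gamma}\,\bar u$. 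Substituting and integrating by parts on the closed surface $\partial D$ (no boundary terms arise) absorbs the surface Laplacian term into $\int_{\partial D}\overline{\mu}\,|\nabla_{\partial D}u|^{2}\,\text{d}s$, which rearranges to
\[
\int_{\partial\Omega} u\,\partial_\nu\bar u\,\text{d}s \;=\; \int_{\Omega}|\nabla u|^{2}\,\text{d}x \;+\; \int_{\partial D}\overline{\mu}\,|\nabla_{\partial D}u|^{2}+\overline{\gamma}\,|u|^{2}\,\text{d}s.
\]
The same application of Green's identity to $u_0$, which is globally harmonic in $\Omega$, gives $\int_{\partial\Omega}u_0\,\partial_\nu\bar u_0\,\text{d}s=\int_{\Omega}|\nabla u_0|^{2}\,\text{d}x$. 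Since $f=u\big|_{\partial\Omega}=u_0\big|_{\partial\Omega}$ and the dual-product definition \eqref{out-dualprod} yields $\langle f,(\Lambda-\Lambda_0)f\rangle_{\partial\Omega}=\int_{\partial\Omega}u\,\partial_\nu\bar u-u_0\,\partial_\nu\bar u_0\,\text{d}s$, subtracting the two displayed equalities produces the claimed identity.

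The only delicate bookkeeping is lining up complex conjugates consistently: the sesquilinear pairing puts the bar on the second factor, so the boundary operator conjugates to $\overline{\mathcal{B}}$ acting on $\bar u$, which is exactly why $\overline{\mu}$ and $\overline{\gamma}$ (rather than $\mu,\gamma$) appear in the final formula. This is the one step where it is easy to drop a bar, but otherwise the argument is routine once Green's identity is applied in the two subdomains separately.
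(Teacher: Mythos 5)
Your proof is correct and follows essentially the same route as the paper's: compactness via interior elliptic regularity for the harmonic function on an annular neighborhood of $\partial\Omega$ together with the compact embedding $H^{1/2}(\partial\Omega)\hookrightarrow H^{-1/2}(\partial\Omega)$ (the paper simply refers back to the proof of Theorem \ref{F-comp-inj} for this step), and the identity via Green's first theorem applied separately on $\Omega\setminus\overline{D}$ and $D$, followed by substitution of the generalized Robin condition. The only cosmetic difference is that you write out the tangential integration by parts on $\partial D$ explicitly, which the paper leaves implicit.
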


\begin{proof}
To show compactness, we follow a similar procedure from the proof of \thmref{F-comp-inj} and is omitted to avoid repetition. 

To prove the identity, we have that by definition
$$ \langle f, (\Lambda - \Lambda_0)f \rangle_{\partial \Omega} = \int_{\partial \Omega} f \partial_{\nu} \overline{u} \, \text{d}s - \int_{\partial \Omega} f \partial_{\nu} \overline{u_0} \, \text{d}s = \int_{\partial \Omega} u \partial_{\nu} \overline{u} \, \text{d}s - \int_{\partial \Omega} u_0 \partial_{\nu} \overline{u_0} \, \text{d}s$$
By Green's first identity on the regions $\Omega \backslash \overline{D}$ and $D$, we have that 
$$\langle f, (\Lambda - \Lambda_0)f \rangle_{\partial \Omega} = \int_{\Omega} |\nabla u|^2 \, \text{d}x + \int_{\partial D} u [\![\partial_\nu \overline{u} ]\!] \, \text{d}s - \int_{\Omega} |\nabla u_0 |^2 \, \text{d}x .$$
From the general Robin boundary condition on $\partial D$, we obtain that
$$\langle f, (\Lambda - \Lambda_0)f \rangle_{\partial \Omega} = \int_{\Omega} |\nabla u|^2 \, \text{d}x + \int_{\partial D} \overline{\mu} |\nabla_{\partial D} u|^2 + \overline{\gamma} |u|^2 \, \text{d}s - \int_{\Omega} |\nabla u_0 |^2 \, \text{d}x $$ 
which proves the claim.
\end{proof}
In order to prove the main result of this section, we define the imaginary part of the current gap operator as
$$\text{Im}(\Lambda - \Lambda_0) = \dfrac{1}{2 \text{i}} \big[ (\Lambda - \Lambda_0) - (\Lambda - \Lambda_0)^{*} \big ] . $$
By \thmref{cgo-comp-id}, we have that
$$\text{Im} \langle f, (\Lambda - \Lambda_0)f \rangle_{\partial \Omega} = \int_{\partial D} \text{Im}(\overline{\mu}) |\nabla_{\partial D} u|^2 + \text{Im}(\overline{\gamma}) |u|^2 \, \text{d}s $$
Recall, that our assumption that the boundary parameters satisfy $-\overline{\xi} \cdot \text{Im}(\mu) \xi  \geq \zeta_2 |\xi|^2> 0$ for all $\xi \in \C^{d-1} \setminus \{0\}$ as well as $-\text{Im}(\gamma) \geq \beta_2 > 0$ a.e. on $\partial D$. Thus, we have that there are constants $C_1 , C_2 > 0$ such that
$$ C_1 \norm{F^{*} f}_{H^{1}(\partial D)}^2 \leq \text{Im} \langle f, (\Lambda - \Lambda_0)f \rangle_{\partial \Omega} \leq C_2 \norm{F^{*} f}_{H^{1}(\partial D)}^2 .$$
The compactness of $\text{Im}(\Lambda - \Lambda_0)$ and injectivity of $F^{*}$ further implies that $\text{Im}(\Lambda - \Lambda_0)$ is a positive compact operator. Thus, there exists a compact operator $Q: H^{1/2}(\partial \Omega) \rightarrow L^{2}(\partial \Omega)$ such that
the imaginary-part of the data operator has the following symmetric factorization
$$\text{Im}(\Lambda - \Lambda_0) = Q^{*}Q .$$
Therefore, we have that 
$$  C_1 \norm{F^{*} f}_{H^{1}(\partial D)}^2 \leq \norm{ Q^{*}f }_{H^{-1/2}(\partial \Omega )}^{2} \leq C_2 \norm{F^{*} f}_{H^{1}(\partial D)}^2$$
for all $f \in H^{1/2}(\partial \Omega)$. In order to finish proving the main result of this section, we state an important lemma connecting the ranges of $Q$ and $F$ which is required by our sampling method. For the proof of the following result we refer to \cite{embry} and \cite{eit-harrach1} where the arguments for real Hilbert spaces can be generalized to Banach spaces.
\begin{lemma}\label{banach-lemma}
Let $A_j$ be bounded linear operators mapping $X_i \rightarrow Y$ where $X_i$ and $Y$ are Banach spaces for $i=1,2$. If
$$\exists \, c_1 , c_2 > 0 \quad \text{such that} \quad c_1 \norm{A_{1}^{*} f}_{X_{1}^{*}} \leq \norm{ A_{2}^{*}f }_{X_{2}^{*}} \leq c_2 \norm{A_{1}^{*} f}_{X_{1}^{*}} $$
for all $f \in Y^{*}$, then $Range(A_1) = Range(A_2)$.
\end{lemma}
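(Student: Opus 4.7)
The plan is to reduce the statement to two applications of a one-sided Douglas-type range inclusion theorem adapted to Banach spaces, and then to prove that one-sided theorem via an adjoint factorization argument. Specifically, it suffices to establish the following one-sided implication: if $B : X_B \to Y$ and $C : X_C \to Y$ are bounded linear operators between Banach spaces and there exists $\kappa > 0$ with $\norm{B^* f}_{X_B^*} \le \kappa \norm{C^* f}_{X_C^*}$ for every $f \in Y^*$, then $Range(B) \subseteq Range(C)$. The lower hypothesis $c_1 \norm{A_1^* f} \le \norm{A_2^* f}$ yields this with $(B,C)=(A_1,A_2)$ and $\kappa=1/c_1$, while the upper hypothesis yields it with $(B,C)=(A_2,A_1)$ and $\kappa = c_2$; combining the two inclusions gives $Range(A_1) = Range(A_2)$.

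To establish the one-sided implication, I would first define a linear map $S$ on the (generally non-closed) subspace $Range(C^*) \subseteq X_C^*$ by the rule $S(C^* f) := B^* f$. The hypothesis $\norm{B^* f} \le \kappa \norm{C^* f}$ makes $S$ both well-defined (if $C^* f_1 = C^* f_2$, then $B^*(f_1 - f_2) = 0$) and bounded with $\norm{S} \le \kappa$. I would then extend $S$ by uniform continuity to the closure of $Range(C^*)$ in $X_C^*$, and by Hahn--Banach to a bounded operator $\widetilde{S} : X_C^* \to X_B^*$ satisfying $\widetilde{S}\, C^* = B^*$.

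Taking Banach-space adjoints produces the identity $C^{**}\, \widetilde{S}^{*} = B^{**}$ as operators from $X_B^{**}$ to $Y^{**}$. Precomposing with the canonical embedding $J_B : X_B \to X_B^{**}$ and using $B^{**} J_B = J_Y B$ gives, for each $x \in X_B$, the relation $C^{**}(\widetilde{S}^{*} J_B x) = J_Y (Bx)$ in $Y^{**}$. The main technical obstacle is the last step: realizing $\widetilde{S}^{*} J_B x$ as an element of $X_C$ (rather than only of $X_C^{**}$), so as to conclude $C(\widetilde{S}^{*} J_B x) = Bx$ in $Y$ and hence $Bx \in Range(C)$. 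This is automatic when $X_C$ is reflexive---the setting relevant to the Sobolev trace spaces in this paper---and in fact yields a bounded factorization $B = C D$ for some $D : X_B \to X_C$, immediately giving the range inclusion. In the general Banach-space case one argues pointwise in $x$ by a weak-$*$ selection together with Hahn--Banach separation, as carried out in \cite{embry} and \cite{eit-harrach1}.
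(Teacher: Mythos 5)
The paper does not actually prove this lemma; it only points to \cite{embry} and \cite{eit-harrach1}, so there is no in-paper argument to compare against. Your reduction to the one-sided Douglas-type implication is correct, and your overall strategy (factor $B^*$ through $C^*$, then dualize) is the standard Embry/Douglas route; in the Hilbert-space setting in which the lemma is actually applied here (all of $H^{1/2}(\partial\Omega)$, $H^{1}(\partial D)$, $L^2(\partial\Omega)$ are Hilbert spaces) your argument does go through, since $\overline{Range(C^*)}$ is then complemented and every space is reflexive.

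However, as a proof of the lemma as stated---for arbitrary Banach spaces---there is a genuine gap at the step ``extend $S$ by Hahn--Banach to a bounded operator $\widetilde{S}:X_C^*\to X_B^*$.'' Hahn--Banach extends scalar-valued functionals, not operators: a bounded operator defined on a closed subspace of a Banach space extends boundedly to the whole space only under extra hypotheses (the subspace is complemented, or the target is an injective Banach space), and neither holds in general, even for reflexive spaces such as $L^p$. The clean way around this, which is essentially what \cite{eit-harrach1} does, is to avoid the operator extension altogether and argue scalar-wise: $y\in Range(C)$ if and only if there is $M>0$ with $|\langle f,y\rangle|\le M\norm{C^*f}_{X_C^*}$ for all $f\in Y^*$. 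Sufficiency follows by defining the scalar functional $C^*f\mapsto\langle f,y\rangle$ on $Range(C^*)$, extending it by (scalar) Hahn--Banach to an element of $X_C^{**}$, and identifying that element with a point of $X_C$---which is where reflexivity (or a further argument in the non-reflexive case) genuinely enters, and why the cited references are still needed for the full Banach-space claim. Applying this criterion with $y=Bx$ and the estimate $|\langle f,Bx\rangle|=|\langle B^*f,x\rangle|\le\kappa\,\norm{x}\,\norm{C^*f}_{X_C^*}$ gives the range inclusion directly, with no double adjoints and no operator extension; your two-sided reduction then finishes the proof exactly as you describe.
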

By the above inequalities and \lemref{banach-lemma}, we have the following result.
\begin{theorem}\label{ranges}
If the boundary coefficients $\gamma$ and $\mu$ satisfy \eqref{gamma-bounds} and \eqref{mu-bounds}, respectively, then
$$Range(F) = Range(Q) .$$
\end{theorem}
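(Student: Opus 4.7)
The plan is to deduce the range identity directly from Lemma \ref{banach-lemma} once the two-sided norm equivalence is in place. All the hard work has been front-loaded into the paragraphs preceding the theorem: the factorization $\text{Im}(\Lambda-\Lambda_0) = Q^*Q$ (valid because $\text{Im}(\Lambda-\Lambda_0)$ is positive, compact, and self-adjoint) and the chain of inequalities
$$C_1 \norm{F^{*} f}_{H^{1}(\partial D)}^2 \leq \norm{ Q^{*}f }_{H^{-1/2}(\partial \Omega )}^{2} \leq C_2 \norm{F^{*} f}_{H^{1}(\partial D)}^2$$
that was established using the identity in Theorem \ref{cgo-comp-id} together with the coercivity bounds \eqref{gamma-bounds}--\eqref{mu-bounds}.

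The first step is to put both operators on an equal footing so that Lemma \ref{banach-lemma} applies. I would view $F$ as a bounded linear map $H^{-1}(\partial D) \to H^{-1/2}(\partial\Omega)$ and, via the dense and continuous embedding $L^{2}(\partial\Omega) \hookrightarrow H^{-1/2}(\partial\Omega)$, view $Q$ as a bounded linear map $H^{1/2}(\partial\Omega) \to H^{-1/2}(\partial\Omega)$. With this common target space $Y = H^{-1/2}(\partial\Omega)$, the duals are $F^{*}: H^{1/2}(\partial\Omega) \to H^{1}(\partial D)$ (as established in Theorem \ref{f-adj}) and $Q^{*}: H^{1/2}(\partial\Omega) \to H^{-1/2}(\partial\Omega)$, which are precisely the operators appearing in the norm equivalence.

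Taking square roots in the chain of inequalities gives
$$\sqrt{C_{1}}\,\norm{F^{*}f}_{H^{1}(\partial D)} \;\leq\; \norm{Q^{*}f}_{H^{-1/2}(\partial \Omega)} \;\leq\; \sqrt{C_{2}}\,\norm{F^{*}f}_{H^{1}(\partial D)}$$
for every $f \in H^{1/2}(\partial\Omega)$, which is exactly the hypothesis of Lemma \ref{banach-lemma} with $A_{1}=F$, $A_{2}=Q$, $X_{1}=H^{-1}(\partial D)$, $X_{2}=H^{1/2}(\partial\Omega)$, and $Y=H^{-1/2}(\partial\Omega)$. The lemma then yields $\text{Range}(F) = \text{Range}(Q)$.

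The main (small) obstacle is the careful identification of pivot spaces and dual pairings: one must be consistent about the sesquilinear product $\langle\cdot,\cdot\rangle_{\partial\Omega}$ from \eqref{out-dualprod} so that the adjoint of $F$ genuinely lands in $H^{1}(\partial D)$ as in Theorem \ref{f-adj}, and so that $Q^{*}$ is computed with respect to the same pairing on $\partial\Omega$. Once this bookkeeping is fixed, the theorem is an immediate consequence of the lemma; no further analysis of the PDE is required at this stage.
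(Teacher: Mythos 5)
Your proposal is correct and follows essentially the same route as the paper, which likewise proves Theorem \ref{ranges} by combining the factorization $\text{Im}(\Lambda-\Lambda_0)=Q^{*}Q$, the two-sided inequality between $\norm{F^{*}f}_{H^{1}(\partial D)}$ and $\norm{Q^{*}f}_{H^{-1/2}(\partial\Omega)}$, and Lemma \ref{banach-lemma}. The only difference is that you spell out the identification of the spaces $X_1$, $X_2$, $Y$ and the dual pairings, which the paper leaves implicit.
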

This allows one to uniquely recover the defective region $D$ from the knowledge the DtN mapping $\Lambda$. Recall, that $Q$ is determined from the imaginary-part of the measured current-gap operator. By all of the theorems of this section and the results of Theorem 2.3 of \cite{harris1}, we create an explicit characterization that will allow us to detect the delaminated region. In our case we have, 
$$ \ell \in Range(Q) \quad \text{if and only if} \quad \liminf_{\alpha \rightarrow 0} \langle f_{\alpha} , \text{Im}(\Lambda - \Lambda_0 ) f_{\alpha} \rangle_{\partial \Omega} < \infty$$ 
where $f_{\alpha}$ is the regularized solution to $\text{Im}(\Lambda - \Lambda_0 ) f = \ell$. By appealing to Theorems \ref{adj-complex-case} and Lemma \ref{banach-lemma} we have that 
\begin{align}\label{complex-singularity}
\partial_{\nu} \mathbb{G}(\cdot , z) \in Range(Q) \quad \text{if and only if} \quad z \in D.
\end{align}
With this, equation \eqref{complex-singularity} and the results of \cite{harris1}, we are able to finally provide the main result of this section. 

\begin{theorem}\label{complex-case-thm}
The  imaginary part of the current-gap operator $\text{Im}(\Lambda - \Lambda_0) : H^{1/2}(\partial \Omega) \rightarrow H^{-1/2}(\partial \Omega)$ uniquely determines $D$ such that for any $z \in \Omega$
$$z \in D \quad \text{if and only if} \quad \liminf_{\alpha \rightarrow 0} \langle f_{\alpha}^{z} , \text{Im}(\Lambda - \Lambda_{0}) f_{\alpha}^{z} \rangle_{\partial \Omega} < \infty$$ 
where $f_{\alpha}^{z}$ is the regularized solution to $\text{Im}(\Lambda - \Lambda_0) f^{z} = \partial_{\nu} \mathbb{G}(\cdot , z ) \big \rvert_{\partial \Omega}$. \end{theorem}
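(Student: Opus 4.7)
The statement is essentially the payoff of the machinery assembled in this section, so my proof is to assemble three ingredients into a single chain of equivalences.

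First I would recall the factorization $\operatorname{Im}(\Lambda - \Lambda_0) = Q^{*}Q$ together with the estimate
\[
C_1 \|F^{*}f\|_{H^{1}(\partial D)}^{2} \leq \|Q^{*}f\|_{H^{-1/2}(\partial \Omega)}^{2} \leq C_2 \|F^{*}f\|_{H^{1}(\partial D)}^{2}
\]
that was established using $-\operatorname{Im}(\gamma) \geq \zeta_2 > 0$ and $-\overline{\xi} \cdot \operatorname{Im}(\mu) \xi \geq \beta_2 |\xi|^2$. This estimate, via Lemma \ref{banach-lemma}, yields $\operatorname{Range}(Q) = \operatorname{Range}(F)$ (Theorem \ref{ranges}). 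Combined with Theorem \ref{adj-complex-case}, this gives the geometric fact recorded in \eqref{complex-singularity}: the test function $\partial_{\nu}\mathbb{G}(\cdot, z)\big|_{\partial \Omega}$ belongs to $\operatorname{Range}(Q)$ if and only if $z \in D$.

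Second, I would invoke the abstract range-testing criterion from \cite{harris1} (Theorem 2.3 there). The operator $\operatorname{Im}(\Lambda - \Lambda_0) = Q^{*}Q$ is compact, self-adjoint, and positive, so Tikhonov regularization of the equation $\operatorname{Im}(\Lambda - \Lambda_0) f = \ell$ produces a family $\{f_\alpha\}_{\alpha > 0}$ for which the abstract result states
\[
\ell \in \operatorname{Range}(Q) \quad \text{if and only if} \quad \liminf_{\alpha \to 0} \langle f_\alpha, \operatorname{Im}(\Lambda - \Lambda_0) f_\alpha \rangle_{\partial \Omega} < \infty.
\]
This criterion converts the range condition, which is not directly checkable from noisy spectral data, into a computable indicator that can be evaluated through the singular value decomposition of the measured operator.

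Third, I would apply this criterion with the specific choice $\ell = \partial_{\nu}\mathbb{G}(\cdot, z)\big|_{\partial \Omega}$ and the regularized solution $f_\alpha^{z}$. Chaining the two equivalences yields
\[
z \in D \;\; \Longleftrightarrow \;\; \partial_{\nu}\mathbb{G}(\cdot, z)\big|_{\partial \Omega} \in \operatorname{Range}(Q) \;\; \Longleftrightarrow \;\; \liminf_{\alpha \to 0} \langle f_\alpha^{z}, \operatorname{Im}(\Lambda - \Lambda_0) f_\alpha^{z} \rangle_{\partial \Omega} < \infty,
\]
which is precisely the claim.

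Since every nontrivial step, namely the factorization, the symmetric coercivity estimate, the range equality, and the singular-solution characterization, has already been carried out above, the only thing that remains is to verify that the hypotheses of the abstract lemma from \cite{harris1} hold for $\operatorname{Im}(\Lambda - \Lambda_0)$. The main (mild) obstacle is therefore just checking that $\operatorname{Im}(\Lambda - \Lambda_0)$ is indeed compact, self-adjoint, and positive on $H^{1/2}(\partial \Omega)$, which follows from the compactness noted in Theorem \ref{cgo-comp-id} together with the symmetric factorization $Q^{*}Q$ and the strict positivity inequalities on $\operatorname{Im}(\mu)$ and $\operatorname{Im}(\gamma)$.
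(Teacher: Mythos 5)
Your proposal is correct and follows essentially the same route as the paper, which likewise assembles Theorem \ref{ranges}, Theorem \ref{adj-complex-case} (yielding \eqref{complex-singularity}), and the regularized range test of Theorem 2.3 in \cite{harris1} applied to the positive compact operator $\text{Im}(\Lambda-\Lambda_0)=Q^{*}Q$. The only cosmetic difference is that you fix Tikhonov regularization while the paper notes any scheme (e.g.\ Spectral cut-off) works since the operator is injective with dense range.
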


Note that a regularized solution is required since $\text{Im}(\Lambda - \Lambda_0)$ is compact. However, since the operator is injective with dense range, we may utilize any regularization technique such as Tikhonov or Spectral cut-off. With our main result, we are able to successfully characterize every point in the known domain $\Omega$ as either inside or outside the region of interest $D$ for the case where the boundary coefficients $\mu$ and $\gamma$ are complex-valued. Furthermore, we show that the DtN mapping $\Lambda$ uniquely determines the damaged region $D$. In other words, one is able to reconstruct the region $D$ from physical measurements on the accessible boundary $\partial \Omega$.

%%%%%%%%%%%%%%%%%%%%%%%%%%%%%%%%%%%%%%%%%%%%%%%%%
\subsection{Real--valued boundary coefficients}\label{real-section}
In this section, we study the case when the interface parameters $\mu$ and $\gamma$ are strictly real-valued. Note that the well-posedness argument in this case is identical to the one provided in Section \ref{dp-ip} since we are synonymously assuming $\text{Im}(\gamma )=0$ and $\text{Im}(\mu) = 0$. We will derive a symmetric factorization for the current-gap operator $(\Lambda - \Lambda_0)$ as similarly done in Section \ref{complex-section}, where the theory was developed in \cite{harris1}. Thus, we will provide another algorithm for recovering the unknown region $D$ from the measurements operator given by the current gap operator $(\Lambda - \Lambda_0)$ for real-valued parameters.

Inspired by the current gap operator $(\Lambda - \Lambda_0)$, we note that $(u - u_0) \in \widetilde{H}^{1}_{0}(\Omega)$ solves
$$- \Delta (u - u_0) = 0 \quad \text{in} \quad \Omega \backslash \partial D \qquad \text{with} \qquad [\![\partial_\nu (u - u_0) ]\!] \big|_{\partial D} = \mathcal{B}(u) \quad \text{on $\partial D$.} $$
So we define $w \in \widetilde{H}^{1}_{0}(\Omega)$ to be the unique solution to the auxiliary problem
\begin{equation}\label{gen-aux-prob}
- \Delta w = 0 \quad \text{in} \quad \Omega \backslash \partial D \qquad \text{with} \qquad [\![\partial_\nu w ]\!] \big|_{\partial D} = \mathcal{B}(h)  \quad \text{on $\partial D$.} 
\end{equation}
for any given $h \in H^{1}(\partial D)$. One can show that \eqref{gen-aux-prob} is well-posed by appealing to a variational formulation argument as in Section \ref{dp-ip}. Thus, we can define the bounded linear Source-to-Neumann operator
$$G:H^{1}(\partial D) \rightarrow H^{-1/2}(\partial \Omega) \quad \text{given by} \quad Gh = \partial_{\nu} w \big \rvert_{\partial \Omega}$$
where $w$ is the unique solution to \eqref{gen-aux-prob}. In order to understand the connection between the operators $G$ and $(\Lambda - \Lambda_0)$, note that by the well-posedness of \eqref{gen-aux-prob} we have that
$$\partial_{\nu} w\big \rvert_{\partial \Omega}  = (\Lambda - \Lambda_0)f \quad \text{provided that} \quad h = u \big \rvert_{\partial D}.$$
From this, we define the solution operator for the electrostatic potential $u$ such that
$$S: H^{1/2}(\partial \Omega) \rightarrow H^{1}(\partial D)  \quad \text{given by} \quad Sf = u \big \rvert_{\partial D}. $$
Therefore, we obtain the initial factorization $(\Lambda - \Lambda_0)f = GSf$ for any $f \in H^{1/2}(\partial \Omega)$. In order to further factorize the operator $(\Lambda - \Lambda_0)$, we need to decompose $G$. In order to do so, we will compute and analyze the adjoint of the solution operator $S$. The adjoint operator $S^{*}$ is detailed in the following result.
\begin{theorem}\label{gen-adjoint}
The adjoint operator $S^{*}: H^{-1} ( \partial D) \rightarrow H^{-1/2} ( \partial \Omega )$ is given by $S^{*}g = \partial_{\nu} v \big|_{\partial \Omega}$ where $v \in \widetilde{H}^1_0 (\Omega)$ satisfies 
\begin{equation} \label{gen-adv}
- \Delta v = 0 \quad \text{in} \quad  \Omega \textbackslash \partial D \quad \text{with} \quad [\![\partial_\nu v ]\!] \big|_{\partial D} = \mathcal{B}(v)  + g \,\,\, \text{on $\partial D$.}
\end{equation} 
Moreover, the operator $S$ is injective.
\end{theorem}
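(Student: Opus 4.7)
The plan has three stages: first verify that the auxiliary problem \eqref{gen-adv} is well-posed; then identify $S^*$ by pairing direct and adjoint solutions through Green's second identity; and finally prove the injectivity of $S$ by a unique continuation argument.

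For well-posedness of \eqref{gen-adv}, I would repeat the variational argument of Section \ref{dp-ip}. Testing against $\varphi \in \widetilde{H}^1_0(\Omega)$, integrating by parts on $\Omega \setminus \overline{D}$ and $D$ separately, and substituting the jump condition on $\partial D$ yields exactly the coercive sesquilinear form $A(\cdot,\cdot)$ from \eqref{ses} on the left and the conjugate-linear functional $\varphi \mapsto -\int_{\partial D} g \overline{\varphi}\, \text{d}s$ on the right. Since $g \in H^{-1}(\partial D)$ pairs with $\overline{\varphi}|_{\partial D} \in H^1(\partial D)$, this functional is bounded on $\widetilde{H}^1_0(\Omega)$, and the Lax--Milgram lemma produces a unique $v \in \widetilde{H}^1_0(\Omega)$ depending continuously on $g$. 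Combining this with the trace theorem shows that $g \mapsto \partial_\nu v|_{\partial \Omega}$ is a bounded linear operator from $H^{-1}(\partial D)$ into $H^{-1/2}(\partial \Omega)$.

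To identify $S^*$, I would apply Green's second theorem on $\Omega \setminus \overline{D}$ and $D$ to the pair $(u, \overline{v})$, where $u$ solves \eqref{gen-pde} with boundary datum $f$ and $v$ solves \eqref{gen-adv} with source $g$. Summing the two identities and using that both functions are harmonic off $\partial D$ leads to a surface identity in perfect analogy with the one displayed in the proof of Theorem \ref{f-adj}. Substituting $v|_{\partial \Omega} = 0$, $u|_{\partial \Omega} = f$ and the transmission conditions $[\![\partial_\nu u]\!] = \mathcal{B}(u)$, $[\![\partial_\nu v]\!] = \mathcal{B}(v) + g$, the $\partial D$ integrand reduces to $u\mathcal{B}(\overline{v}) - \overline{v}\mathcal{B}(u) + u\overline{g}$; here I use the real-valuedness of $(\mu,\gamma)$ to commute complex conjugation with $\mathcal{B}$. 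Because $\mathcal{B}$ is self-adjoint on the closed surface $\partial D$---a single integration by parts identifies both $\int_{\partial D} u\mathcal{B}(\overline{v})\, \text{d}s$ and $\int_{\partial D} \overline{v}\mathcal{B}(u)\, \text{d}s$ with the common quantity $\int_{\partial D} \mu \nabla_{\partial D} u \cdot \nabla_{\partial D} \overline{v} + \gamma u \overline{v}\, \text{d}s$, by symmetry of the real matrix $\mu$---the $\mathcal{B}$ contributions cancel. What remains is $\langle f, \partial_\nu v|_{\partial \Omega}\rangle_{\partial \Omega} = \langle Sf, g\rangle_{\partial D}$ in the sesquilinear pairing \eqref{out-dualprod}, which is precisely the claim $S^* g = \partial_\nu v|_{\partial \Omega}$.

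For injectivity of $S$, suppose $Sf = u|_{\partial D} = 0$. Then the restriction of $u$ to $D$ is harmonic with vanishing Dirichlet trace, so $u \equiv 0$ in $D$ and hence $\partial_\nu u^-|_{\partial D} = 0$. The vanishing of the $H^1(\partial D)$ trace of $u$ also forces $\mathcal{B}(u) = 0$ on $\partial D$, so the transmission condition $[\![\partial_\nu u]\!] = \mathcal{B}(u)$ yields $\partial_\nu u^+|_{\partial D} = 0$ as well. Thus $u$ is harmonic in $\Omega \setminus \overline{D}$ with zero Cauchy data on $\partial D$, and Holmgren's theorem forces $u \equiv 0$ in $\Omega \setminus \overline{D}$; taking the Dirichlet trace on $\partial \Omega$ gives $f = 0$. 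The most delicate point in the whole argument is the cancellation of the $\mathcal{B}$-terms in the adjoint computation, which relies both on the real-valuedness of $(\mu,\gamma)$ (so complex conjugation passes through $\mathcal{B}$ and $\mu$ is symmetric) and on the absence of boundary terms when integrating by parts on the closed manifold $\partial D$. These are precisely the features that allow $\mathcal{B}$ itself---rather than its conjugate $\overline{\mathcal{B}}$, as in \eqref{adj-prob}---to appear in the adjoint problem in this real-valued setting.
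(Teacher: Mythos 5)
Your proposal is correct and follows essentially the same route as the paper's proof: well-posedness of \eqref{gen-adv} by the Lax--Milgram argument of Section \ref{dp-ip}, identification of $S^*$ via Green's second theorem together with the boundary conditions and the symmetry of $\mathcal{B}$ under real-valued $(\mu,\gamma)$, and injectivity of $S$ by vanishing of the Cauchy data on $\partial D$ plus Holmgren's theorem. Your added detail on why the $\mathcal{B}$-terms cancel (integration by parts on the closed manifold $\partial D$ and the realness of the coefficients) makes explicit what the paper only cites as ``the symmetry of $\mathcal{B}(\cdot)$.''
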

\begin{proof} 
Notice, that by using a variational argument we can establish that the solution $v \in \widetilde{H}_{0}^{1} ( \Omega )$ exists, is unique, and continuously depends on $g \in H^{-1} ( \partial D)$. Using a similar technique used in the proof of Theorem \ref{f-adj}, we have that 
$$ 0 =  \int_{\partial \Omega} \overline{v} \, \partial_{\nu}u - u \, \partial_{\nu} \overline{v} \, \text{d}s - \int_{\partial D} \overline{v}  [\![\partial_\nu {u} ]\!]  \, \text{d}s + \int_{\partial D} u  [\![\partial_\nu \overline{v} ]\!]  \, \text{d}s. $$
Thus, by the boundary conditions on $\partial \Omega$ we have that
$$  \int_{\partial \Omega}  f \, \partial_{\nu} \overline{v} \, \text{d}s =  \int_{\partial D} u [\![\partial_\nu \overline{v} ]\!] \, \text{d}s -\int_{\partial D} \overline{v} [\![\partial_\nu u ]\!]\, \text{d}s. $$
By the boundary condition on $\partial D$ for $u$ and $v$, we have that
\begin{align*}
\int_{\partial D} u [\![\partial_\nu \overline{v} ]\!] \, \text{d}s  -  \int_{\partial D} \overline{v} [\![\partial_\nu u ]\!]\, \text{d}s &=  \int_{\partial D} u \overline{(g+\mathcal{B}(v) )} \, \text{d}s -\int_{\partial D} \overline{v} \mathcal{B}(u)\, \text{d}s \\
	&= \int_{\partial D} u \overline{g} \, \text{d}s
\end{align*} 
With the dual-product on the boundaries $\partial D$ and $\partial \Omega$ as defined in \eqref{out-dualprod}, we have that $$ \langle Sf , g \rangle_{\partial D} = \int_{\partial D} u \overline{g} \, \text{d}s = \int_{\partial \Omega} f \partial_{\nu} \overline{v} \, \text{d}s = \langle f , S^{*} g \rangle_{\partial \Omega}$$
for all $f \in H^{1/2} (\partial \Omega)$ and $g \in H^{-1} (\partial D)$ which implies that $S^{*} g = \partial_{\nu} v \big|_{\partial D}$.

To prove injectivity, we let $Sf = 0$ which implies that $u =0$ in $\bar{D}$. By our boundary condition, we have that $[\![\partial_\nu u ]\!] \big \rvert_{\partial D}= \mathcal{B}(u) = 0$ on $\partial D$. Thus, $\partial_{\nu} u \big \rvert_{\partial D}^{+} = 0$. Using Holmgren's Theorem, we have that $u = 0$ in $\Omega$. Then by the Trace Theorem, we have that $f = 0$ on $\partial \Omega$, proving that $S$ is injective.
\end{proof}

In order to complete the factorization of the current gap operator, we need to define a middle operator $T$. Recall, that $w$ is the unique solution to equation \eqref{gen-aux-prob}, which implies that $w$ is harmonic in $\Omega \textbackslash \partial D$ and
$$ [\![\partial_\nu w ]\!] \big|_{\partial D} = \mathcal{B}(w) + \mathcal{B}(h-w)  \quad \text{on $\partial D$.} $$
Therefore, we have that 
$$\partial_{\nu} w \big|_{\partial \Omega} = Gh \quad \text{as well as} \quad  \partial_{\nu} w \big|_{\partial \Omega} =S^{*}  \mathcal{B}(h-w)$$
by the well-posedness of \eqref{gen-adv} and Theorem \ref{gen-adjoint}. Motivated by this, we define the operator 
$$T: H^{1} ( \partial D ) \rightarrow H^{-1} ( \partial D ) \quad \text{given by} \quad Th = \mathcal{B}(h-w)$$
By the well-posedness of \eqref{gen-aux-prob}, $T$ is a bounded linear operator. Recall, that we had already established that $(\Lambda - \Lambda_0) = GS$ and observe that we have factorized the operator $G$ such that $G = S^{*}T$. This gives the following result.

\begin{theorem}\label{factorization} 
The difference of the DtN mappings $(\Lambda - \Lambda_0): H^{1/2} ( \partial \Omega) \rightarrow H^{-1/2} ( \partial \Omega )$ has the symmetric  factorization $( \Lambda - \Lambda_0 ) = S^{*} TS$. 
\end{theorem}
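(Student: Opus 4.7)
The strategy is to combine two intermediate factorizations that the preceding paragraphs have already laid out: the relation $(\Lambda-\Lambda_0) = GS$ arising from the fact that $u-u_0$ solves the auxiliary problem \eqref{gen-aux-prob} with boundary data $u|_{\partial D} = Sf$, together with a secondary factorization $G = S^{*}T$ coming from viewing the same auxiliary solution $w$ as a solution of the adjoint-type problem \eqref{gen-adv}. Once both are in hand, the claim follows by direct substitution. So my plan is simply to verify these two identities rigorously and then compose.

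For the first identity, I would set $h = Sf = u|_{\partial D}$ and observe that $u-u_0 \in \widetilde{H}^1_0(\Omega)$ is harmonic in $\Omega\setminus\partial D$ and satisfies $[\![\partial_\nu(u-u_0)]\!]|_{\partial D} = \mathcal{B}(u) = \mathcal{B}(h)$ on $\partial D$. By the uniqueness half of the well-posedness for \eqref{gen-aux-prob}, we conclude $u-u_0 = w$, hence $\partial_\nu(u-u_0)|_{\partial \Omega} = \partial_\nu w|_{\partial\Omega} = Gh = G(Sf)$, giving $(\Lambda-\Lambda_0)f = GSf$.

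For the second identity, fix $h \in H^1(\partial D)$ and let $w$ be the unique solution of \eqref{gen-aux-prob}. Writing $\mathcal{B}(h) = \mathcal{B}(w) + \mathcal{B}(h-w)$, we see that $w$ also satisfies the adjoint-type transmission condition $[\![\partial_\nu w]\!]|_{\partial D} = \mathcal{B}(w) + g$ with source $g := \mathcal{B}(h-w) = Th \in H^{-1}(\partial D)$; boundedness of $T$ from $H^1(\partial D)$ into $H^{-1}(\partial D)$ follows from the continuous dependence of $w$ on $h$ guaranteed by the well-posedness of \eqref{gen-aux-prob}. Thus $w$ coincides with the solution of \eqref{gen-adv} with datum $g = Th$, and Theorem \ref{gen-adjoint} gives $\partial_\nu w|_{\partial \Omega} = S^{*}g = S^{*}(Th)$. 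Comparing with the definition of $G$ yields $Gh = S^{*}Th$, i.e.\ $G = S^{*}T$ on $H^1(\partial D)$.

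Composing these two identities produces $(\Lambda-\Lambda_0) = GS = (S^{*}T)S = S^{*}TS$ as operators from $H^{1/2}(\partial\Omega)$ to $H^{-1/2}(\partial\Omega)$, which is the desired symmetric factorization. There is no real analytical obstacle here; the only subtlety, and the step that deserves a sentence of care in the write-up, is the use of uniqueness in the first identity (to identify $u-u_0$ with $w$) and the algebraic rewriting of the jump condition in the second identity (to realize the same $w$ as a solution of two differently-posed transmission problems). Both are justified by the well-posedness statements already proved in Section \ref{dp-ip} and Theorem \ref{gen-adjoint}.
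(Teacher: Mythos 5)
Your proposal is correct and follows essentially the same route as the paper: the text preceding Theorem \ref{factorization} establishes exactly the two identities $(\Lambda-\Lambda_0)=GS$ and $G=S^{*}T$ (the latter by rewriting the jump condition as $\mathcal{B}(h)=\mathcal{B}(w)+\mathcal{B}(h-w)$ and invoking Theorem \ref{gen-adjoint}), and the theorem is then obtained by composition, just as you do. Your write-up is if anything slightly more careful, since you make explicit the uniqueness argument identifying $u-u_0$ with $w$ when $h=Sf$.
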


In order to apply Theorem 2.3 from \cite{harris1} to solve the inverse problem of recovering $D$ from the current gap operator $(\Lambda - \Lambda_0)$, we need to prove that $T$ is coercive as well as characterize the region $D$ by the range of $S^{*}$. The following two results will allow us to prove some useful properties of the current gap operator using the symmetric factorization from the previous theorem. We now prove the coercivity of the operator $T$.

\begin{theorem}\label{gen-tcoercive}
The operator $T: H^{1} ( \partial D ) \rightarrow H^{-1} ( \partial D )$ defined by 
\begin{equation}\label{T-mapping}
Th =\mathcal{B}(h-w)
\end{equation}
is coercive on $H^{-1} ( \partial D )$, where $h \in H^{1}( \partial D)$ and $w \in \widetilde{H}^{1}_{0} (\Omega)$ satisfies \eqref{gen-aux-prob}.
\end{theorem}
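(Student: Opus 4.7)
The plan is to compute $\langle Th,h\rangle_{\partial D}$ explicitly and show it dominates $\|h\|^2_{H^1(\partial D)}$. By definition $Th=\mathcal{B}(h-w)$ with $\mathcal{B}$ given by \eqref{lap-bel}, and since $\partial D$ is a closed manifold, integration by parts on $\partial D$ produces no boundary terms. Thus
\begin{align*}
\langle Th,h\rangle_{\partial D}
= \int_{\partial D} \bigl[\mu\nabla_{\partial D}(h-w)\cdot\nabla_{\partial D}\overline{h} + \gamma(h-w)\overline{h}\bigr]\,\text{d}s
= \mathcal{I}_1 - \mathcal{I}_2,
\end{align*}
where $\mathcal{I}_1 := \int_{\partial D}[\mu|\nabla_{\partial D}h|^2 + \gamma|h|^2]\,\text{d}s$ and $\mathcal{I}_2 := \int_{\partial D}[\mu\nabla_{\partial D}w\cdot\nabla_{\partial D}\overline{h} + \gamma w\overline{h}]\,\text{d}s$.

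Next, I would derive the variational formulation of \eqref{gen-aux-prob} by applying Green's first identity separately on $\Omega\setminus\overline{D}$ and on $D$ and adding. Using $w|_{\partial\Omega}=0$, the jump condition $[\![\partial_\nu w]\!]=\mathcal{B}(h)$, and one more integration by parts on $\partial D$, this yields
\begin{align*}
\int_\Omega \nabla w \cdot \nabla\overline{\varphi}\,\text{d}x + \int_{\partial D}\bigl[\mu\nabla_{\partial D}h\cdot\nabla_{\partial D}\overline{\varphi} + \gamma h\overline{\varphi}\bigr]\,\text{d}s = 0
\quad\text{for all } \varphi\in\widetilde{H}^1_0(\Omega).
\end{align*}
Testing with $\varphi=w$ gives $\int_{\partial D}[\mu\nabla_{\partial D}h\cdot\nabla_{\partial D}\overline{w}+\gamma h\overline{w}]\,\text{d}s=-\int_\Omega|\nabla w|^2\,\text{d}x\in\mathbb{R}$. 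Since $\mu$ is real and symmetric and $\gamma$ is real, taking complex conjugates of both sides shows $\mathcal{I}_2=-\int_\Omega|\nabla w|^2\,\text{d}x$.

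Substituting back produces the clean identity
\begin{align*}
\langle Th,h\rangle_{\partial D}
= \int_{\partial D}\bigl[\mu|\nabla_{\partial D}h|^2 + \gamma|h|^2\bigr]\,\text{d}s + \int_\Omega |\nabla w|^2\,\text{d}x.
\end{align*}
The real-valued counterparts of assumptions \eqref{gamma-bounds}--\eqref{mu-bounds} give $\gamma\geq\zeta_1>0$ and $\overline{\xi}\cdot\mu\xi\geq\beta_1|\xi|^2$, so after discarding the nonnegative volume term,
\begin{align*}
\langle Th,h\rangle_{\partial D}
\geq \beta_1\|\nabla_{\partial D}h\|_{L^2(\partial D)}^2 + \zeta_1\|h\|_{L^2(\partial D)}^2
\geq \min(\beta_1,\zeta_1)\,\|h\|_{H^1(\partial D)}^2,
\end{align*}
which is the coercivity claim.

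The only delicate step is the conjugation argument that turns the expression produced by testing with $\varphi=w$ into $\mathcal{I}_2$. It relies crucially on the real symmetry of the matrix $\mu$ and the realness of $\gamma$; in the complex-valued setting of Section \ref{complex-section} one would only recover a bound in terms of $\text{Re}(\mu)$ and $\text{Re}(\gamma)$, which is why the complex case had to be treated via the imaginary part of the data operator instead. Everything else is a direct computation, and the bonus positive term $\int_\Omega |\nabla w|^2$ produced by the cancellation makes the final lower bound immediate.
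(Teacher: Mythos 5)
Your proof is correct and follows essentially the same route as the paper: both arrive at the identity $\langle Th,h\rangle_{\partial D}=\int_{\partial D}\mu|\nabla_{\partial D}h|^2+\gamma|h|^2\,\text{d}s+\int_{\Omega}|\nabla w|^2\,\text{d}x$ and conclude coercivity from the lower bounds on $\mu$ and $\gamma$, the only difference being that you evaluate the cross term via the variational formulation tested with $\varphi=w$ (plus a conjugation using the realness of the coefficients), while the paper invokes the symmetry of $\mathcal{B}$ together with the Green's-identity computation $\int_{\Omega}|\nabla w|^2\,\text{d}x=-\int_{\partial D}\overline{w}[\![\partial_\nu w]\!]\,\text{d}s$ --- the same calculation organized differently. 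Your closing remark correctly identifies why this argument is confined to the real-valued case.
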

\begin{proof} Using the generalized Robin transmission condition on $\partial D$ in equation \eqref{gen-aux-prob}, we have that
\begin{align*}
	\langle h, Th \rangle_{ \partial D }  &= \int_{\partial D} h   \overline{\mathcal{B}(h-w)}  \, \text{d}s \\
	&= \int_{\partial D} \mathcal{B}( h)  \overline{(h-w)} \, \text{d}s \quad \text{ by the symmetry of $\mathcal{B}(\cdot)$} \\
	&= \int_{\partial D} \overline{h} {\mathcal{B}(h)} \, \text{d}s - \int_{\partial D}  \overline{w} [\![\partial_\nu w ]\!]  \, \text{d}s \quad \text{ by equation \ref{gen-aux-prob}} \\
	&= \int_{\partial D} \mu \rvert \nabla_{\partial D} h  \rvert^2 + \gamma |h|^2 \, \text{d}s - \int_{\partial D}  \overline{w} [\![\partial_\nu w ]\!]  \, \text{d}s .
\end{align*} 
Following a similar technique used to derive \eqref{gen-vf}, we have that 
$$ \int_{\Omega \backslash D} |\nabla w|^2 \, \text{d}x = - \int_{\partial D} \overline{w} \partial_{\nu} w^{+}  \, \text{d}s \quad \text{and} \quad \int_{D} |\nabla w|^2 \, \text{d}x = \int_{\partial D} \overline{w} \partial_{\nu} w^{-} \, \text{d}s.$$
Adding both equations above and using the boundary condition on $\partial D$  yields
 $$ \int_{\Omega} |\nabla w|^2 \, \text{d}x = - \int_{\partial D} \overline{w} [\![\partial_\nu w ]\!]  \, \text{d}s.$$ 
This yields the following
\begin{align*}
	\langle h, Th \rangle_{ \partial D }  &= \int_{\partial D}  \mu \rvert \nabla_{\partial D} h  \rvert^2 + \gamma |h|^2 \, \text{d}s + \int_{\Omega} |\nabla w |^{2} \, \text{d}x \\
	&\geq \text{min} \brac{\zeta_1,\beta_1} \int_{\partial D} |\nabla_{\partial D} h|^2 + |h|^2 \, \text{d}s
\end{align*} 
which proves the claim.
\end{proof}

The following two results are critical in allowing us to prove the main theorem of this section which characterizes the analytical properties of the current gap operator.
 
\begin{theorem}\label{gen-DtNprop}
The difference of the DtN mappings $(\Lambda - \Lambda_0 ): H^{1/2} ( \partial \Omega ) \rightarrow H^{-1/2} ( \partial \Omega )$ is compact, injective, and has dense range.
\end{theorem}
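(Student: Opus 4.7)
The plan is to handle compactness, injectivity, and dense range separately, leveraging the symmetric factorization $(\Lambda - \Lambda_0) = S^* T S$ from Theorem~\ref{factorization}, the coercivity of $T$ from Theorem~\ref{gen-tcoercive}, and the Hermitian symmetry available in the real-coefficient setting.

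For compactness, I would mimic the smoothing argument from the proof of Theorem~\ref{F-comp-inj}. Since $\mathrm{dist}(\partial\Omega, \overline{D}) > 0$, choose a $\mathcal{C}^2$ domain $\Omega^*$ with $D \subset \Omega^* \subset \Omega$. The difference $u - u_0$ is harmonic in $\Omega \setminus \overline{\Omega^*}$ with zero Dirichlet trace on $\partial\Omega$, so standard elliptic regularity places $u - u_0 \in H^{2}(\Omega \setminus \overline{\Omega^*})$ with a norm bound controlled by $\|f\|_{H^{1/2}(\partial\Omega)}$ through the well-posedness estimates for \eqref{gen-pde} and \eqref{hl}. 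The Trace Theorem then puts $(\Lambda - \Lambda_0) f \in H^{1/2}(\partial\Omega)$, and the compact embedding $H^{1/2}(\partial\Omega) \hookrightarrow H^{-1/2}(\partial\Omega)$ closes this step.

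For injectivity, assume $(\Lambda - \Lambda_0)f = 0$. Pairing with $f$ and applying the factorization yields
$$0 = \langle f, (\Lambda - \Lambda_0) f \rangle_{\partial\Omega} = \langle Sf, T S f \rangle_{\partial D}.$$
Since $\mu$ and $\gamma$ are real the right-hand side is real, and the coercivity estimate of Theorem~\ref{gen-tcoercive} forces $Sf = 0$ in $H^{1}(\partial D)$. The injectivity of $S$ that was proved inside Theorem~\ref{gen-adjoint} then gives $f = 0$.

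For dense range, I would verify that $(\Lambda - \Lambda_0)$ is Hermitian with respect to $\langle \cdot, \cdot \rangle_{\partial\Omega}$. An application of Green's identities on $\Omega \setminus \overline{D}$ and $D$ combined with the symmetry of $\mathcal{B}(\cdot)$ yields, for real $\mu$ and $\gamma$,
$$\langle f_1, \Lambda f_2 \rangle_{\partial\Omega} = \int_{\Omega} \nabla u_1 \cdot \nabla \overline{u_2}\,\text{d}x + \int_{\partial D} \mu \nabla_{\partial D} u_1 \cdot \nabla_{\partial D} \overline{u_2} + \gamma u_1 \overline{u_2}\,\text{d}s,$$
and an analogous identity for $\Lambda_0$ without the $\partial D$ contributions, so that $\langle f_1, (\Lambda - \Lambda_0) f_2 \rangle_{\partial\Omega} = \overline{\langle f_2, (\Lambda - \Lambda_0) f_1 \rangle_{\partial\Omega}}$. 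If $g \in H^{1/2}(\partial\Omega)$ annihilates $\mathrm{Range}(\Lambda - \Lambda_0)$, this symmetry forces $(\Lambda - \Lambda_0) g = 0$, whence $g = 0$ by the injectivity just proved, and density of the range follows by Hahn--Banach. The main technical obstacle is the elliptic regularity step in the compactness argument, since $u - u_0$ has a jump across $\partial D$; restricting to a collar of $\partial\Omega$ disjoint from $\overline{D}$ is what makes the estimate go through, and the remaining bookkeeping of conjugates in the Hermitian identity is routine.
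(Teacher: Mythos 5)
Your proposal is correct, and its skeleton---a smoothing argument for compactness, then reducing both injectivity and dense range to the vanishing of the quadratic form $\langle f, (\Lambda-\Lambda_0)f\rangle_{\partial\Omega}$---matches the paper's. The genuine difference is in how that vanishing forces $Sf=0$. The paper expands $\langle f,(\Lambda-\Lambda_0)f\rangle_{\partial\Omega}$ directly via Green's identities and invokes the fact that the harmonic function $u_0$ minimizes the Dirichlet energy among $H^1$ functions with trace $f$ on $\partial\Omega$, so that $0 \geq \int_{\partial D}\mu|\nabla_{\partial D}u|^2 + \gamma|u|^2\,\text{d}s$ and positivity of the coefficients gives $u|_{\partial D}=0$; you instead route the same conclusion through the factorization $(\Lambda-\Lambda_0)=S^*TS$ of Theorem \ref{factorization} together with the coercivity of $T$ from Theorem \ref{gen-tcoercive}. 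The two are essentially equivalent---the coercivity proof of $T$ already contains the term $\int_\Omega|\nabla w|^2\,\text{d}x$ that plays the role of the Dirichlet-energy difference---but your version makes tidier use of the machinery assembled in this section, at the price of depending on Theorem \ref{gen-tcoercive} rather than being self-contained. For dense range, the paper simply evaluates the annihilator condition at $g=f$ to land on the same quadratic form, with no need for the Hermitian symmetry of $(\Lambda-\Lambda_0)$; your detour through self-adjointness and Hahn--Banach is also valid but costs an extra identity to verify. One minor correction: in the compactness step, $u-u_0\in H^1(\Omega)$ has no jump in its trace across $\partial D$ (only its normal derivative jumps), but as you note this is immaterial once you localize to a collar of $\partial\Omega$ disjoint from $\overline D$.
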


\begin{proof} 
To prove compactness follows from Theorem \thmref{cgo-comp-id}. 

We prove that the current gap operator $(\Lambda - \Lambda_0)$ is injective and has dense range using a similar  argument as in \cite{EIT-granados1}. That is, we show that the set of annihilators for $Range(\Lambda - \Lambda_0)$ and $Null(\Lambda - \Lambda_0 )$ are trivial. To this end, note that for all $f,g \in H^{1/2} ( \partial \Omega)$
\begin{align*}
 \langle g , (\Lambda - \Lambda_0) f \rangle_{\partial \Omega} &=  \int_{\partial \Omega} g \, \partial_{\nu} \overline{u (\cdot \, , f)} - g \, \partial_{\nu} \overline{u_{0} (\cdot \,, f)} \, \text{d}s\\
	&= \int_{\partial \Omega} u (\cdot \,, g) \, \partial_{\nu} \overline{u (\cdot \,, f) } - u_{0}(\cdot \,, g) \, \partial_{\nu} \overline{u_{0}(\cdot \, , f)} \, \text{d}s 
\end{align*} 
where the pairs $(u(\cdot , f) , u(\cdot , g))$ and $(u_{0}(\cdot , f) , u_{0}(\cdot , g))$ are solutions to \eqref{gen-pde} and \eqref{hl} with Dirichlet boundary conditions $f$ and $g$ in $H^{1/2}(\partial \Omega)$, respectively. With this, we can use Green's 1st Theorem which implies that 
\begin{align*}
\langle g , (\Lambda - \Lambda_0) f \rangle_{\partial \Omega} &= \int_{\Omega} \nabla u(\cdot \, , g) \cdot \nabla \overline{u(\cdot \, ,f)} \, \text{d}x -  \int_{\Omega} \nabla u_{0}(\cdot \, , g) \cdot \nabla \overline{u_{0}(\cdot \, , f)} \, \text{d}x \\
& \hspace{1in} +  \int_{\partial D} \mathcal{B} \left(u(\cdot \, ,g) \right)\, \overline{u(\cdot \, , f)} \, \text{d}s
\end{align*}  
by the boundary value problems \eqref{gen-pde} and \eqref{hl}. To prove the claim, suppose $f \in H^{1/2} ( \Omega)$ is an annihilator for $Range(\Lambda - \Lambda_0)$ or that $f \in Null (\Lambda - \Lambda_0 )$. In either case, we have that
\begin{align*}
	0   &=  \langle f , (\Lambda - \Lambda_0 ) f \rangle_{\partial \Omega} \\
	&= \int_{\Omega} | \nabla u(\cdot \, , f) |^{2} \, \text{d}x - \int_{\Omega} | \nabla u_{0}(\cdot \, , f) |^{2} \, \text{d}x + \int_{\partial D} \mu |\nabla_{\partial D} u(\cdot , f)|^{2} + \gamma | u(\cdot \, , f) |^{2} \, \text{d}x \\
	&\geq \int_{\partial D} \mu |\nabla_{\partial D} u(\cdot , f)|^{2} + \gamma | u(\cdot \, , f) |^{2} \, \text{d}x
\end{align*} 
where we have used that the harmonic function $u_{0}(\cdot , f)$ minimizes the Dirichlet energy. By \thmref{gen-adjoint}, $S$ is injective which implies that $f=0$, proving both claims.
\end{proof}

All of the theorems of this section imply that the current gap operator $(\Lambda - \Lambda_0)$ satisfies all of the conditions of Theorem 2.3 of \cite{harris1}. Similarly as in Section \ref{complex-section}, we have that $$ \ell \in Range(S^{*}) \quad \text{if and only if} \quad \liminf_{\alpha \rightarrow 0} \langle f_{\alpha} , (\Lambda - \Lambda_0 ) f_{\alpha} \rangle_{\partial \Omega} < \infty$$ where $f_{\alpha}$ is the regularized solution to $(\Lambda - \Lambda_0 ) f = \ell$. Since $(\Lambda - \Lambda_0)$ is compact and injective with a dense range, we can apply any regularization scheme. In a similar way, we show the connection between the domain $D$ and the range of the operator $S^{*}$. We once again use the Dirichlet Green's function for the negative Laplacian for the known domain $\Omega$, $\mathbb{G}(\cdot , z) \in H_{loc}^{1} (\Omega \textbackslash \brac{z})$ for any fixed $z \in \Omega$. Recall that the idea of the following result is to show that due to the singularity at $z$, the normal derivative of the Green's function is not contained in the range of $S^{*}$ unless the singularity is contained within the region of interest $D$.

\begin{theorem} \label{greenchar}
The operator $S^{*}$ is such that for any $z \in \Omega$ 
$$\partial_{\nu} \mathbb{G} (\cdot , z) \big|_{\partial D} \in Range(S^{*}) \quad \text{if and only if} \quad z \in D.$$
\end{theorem}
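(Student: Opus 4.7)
The plan is to mirror almost verbatim the argument of Theorem \ref{adj-complex-case}, exploiting the parallel structure between $F$ and $S^{*}$: both operators send a source (respectively a transmission defect) on $\partial D$ to the Neumann trace on $\partial \Omega$ of a function which is harmonic in $\Omega \setminus \partial D$ and vanishes on $\partial \Omega$. Since the Dirichlet Green's function $\mathbb{G}(\cdot,z)$ is the unique harmonic function in $\Omega \setminus \{z\}$ with vanishing Dirichlet trace modulo its singularity at $z$, the obstruction to realizing its normal trace on $\partial \Omega$ through $S^{*}$ is precisely the presence of the singularity outside $\overline{D}$. (Here I read the statement as $\partial_\nu \mathbb{G}(\cdot,z)\rvert_{\partial \Omega}$, which is the natural target space $H^{-1/2}(\partial \Omega)$ of $S^{*}$.)

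For the ``only if'' direction, I would assume $z \in \Omega \setminus \overline{D}$ and suppose toward a contradiction that there exists $g_z \in H^{-1}(\partial D)$ with $S^{*} g_z = \partial_\nu \mathbb{G}(\cdot,z)\rvert_{\partial \Omega}$. Letting $v_z \in \widetilde H^1_0(\Omega)$ be the corresponding solution of \eqref{gen-adv}, the function $V_z := v_z - \mathbb{G}(\cdot,z)$ is harmonic in $\Omega \setminus (\overline{D} \cup \{z\})$ with zero Cauchy data on $\partial \Omega$. Holmgren's theorem forces $v_z = \mathbb{G}(\cdot,z)$ in the connected component of $\Omega \setminus (\overline{D} \cup \{z\})$ touching $\partial \Omega$, which contains $z$. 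Interior elliptic regularity gives $v_z$ bounded near $z$ while $|\mathbb{G}(\cdot,z)|\to \infty$ at $z$, a contradiction.

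For the ``if'' direction, I would assume $z \in D$ and construct $v_z$ explicitly. Set $v_z = \mathbb{G}(\cdot,z)$ in $\Omega \setminus \overline D$, and let $\eta \in H^1(D)$ solve the Dirichlet problem $\Delta \eta = 0$ in $D$ with $\eta\rvert_{\partial D}^{-} = \mathbb{G}(\cdot,z)\rvert_{\partial D}^{+}$, and set $v_z = \eta$ in $D$. Then $v_z \in \widetilde H^1_0(\Omega)$, is harmonic in $\Omega \setminus \partial D$, and satisfies $\partial_\nu v_z\rvert_{\partial \Omega} = \partial_\nu \mathbb{G}(\cdot,z)\rvert_{\partial \Omega}$. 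Defining
\[
g_z \;:=\; [\![\partial_\nu v_z]\!]\big\rvert_{\partial D} - \mathcal{B}(v_z)
\]
on $\partial D$, the $v_z$ so constructed satisfies \eqref{gen-adv} with source $g_z$, so that $S^{*} g_z = \partial_\nu \mathbb{G}(\cdot,z)\rvert_{\partial \Omega}$ by definition of $S^{*}$.

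The main technical step is verifying $g_z \in H^{-1}(\partial D)$. Because $\mathrm{dist}(z,\partial D)>0$, we have $\mathbb{G}(\cdot,z) \in H^2$ in a neighbourhood of $\partial D$ in $\Omega \setminus \overline D$, so $\mathbb{G}(\cdot,z)\rvert_{\partial D}^{+} \in H^{3/2}(\partial D)$; elliptic regularity up to the boundary then gives $\eta \in H^2(D)$ and hence $\partial_\nu \eta\rvert_{\partial D}^{-} \in H^{1/2}(\partial D)$. Thus the jump $[\![\partial_\nu v_z]\!]$ lies in $H^{1/2}(\partial D) \hookrightarrow H^{-1}(\partial D)$, and since $v_z\rvert_{\partial D} \in H^1(\partial D)$, the Laplace--Beltrami term $\nabla_{\partial D}\cdot \mu \nabla_{\partial D} v_z$ and the zeroth order term $\gamma v_z$ both lie in $H^{-1}(\partial D)$. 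I anticipate this regularity bookkeeping to be the only subtle point; the geometric and uniqueness ingredients are immediate from the analogue in Theorem \ref{adj-complex-case}.
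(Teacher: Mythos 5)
Your argument is correct, and your reading of the statement as concerning $\partial_\nu \mathbb{G}(\cdot,z)\rvert_{\partial \Omega}$ (rather than the $\rvert_{\partial D}$ appearing in the theorem as printed) is the right one, since that is the target space of $S^{*}$ and is how the result is used later. However, the paper proves this theorem in two lines by a reduction you did not notice: when $\mu$ and $\gamma$ are real-valued, the boundary value problem \eqref{gen-adv} defining $S^{*}$ is literally the same as the adjoint problem \eqref{adj-prob} defining $F$ (the complex conjugation $\overline{\mathcal{B}}$ becomes $\mathcal{B}$), so $S^{*}=F$ as operators from $H^{-1}(\partial D)$ to $H^{-1/2}(\partial \Omega)$, and the range characterization follows immediately from Theorem \ref{adj-complex-case}. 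What you have done instead is to inline the proof of Theorem \ref{adj-complex-case}, adapted to $S^{*}$: the Holmgren-plus-singularity contradiction for $z\notin\overline{D}$, and for $z\in D$ the explicit construction of $v_z$ from $\mathbb{G}(\cdot,z)$ and its harmonic extension $\eta$, together with the regularity bookkeeping showing $g_z\in H^{-1}(\partial D)$. Your version is self-contained and every step is sound (your restriction of the unique continuation to the connected component meeting $\partial\Omega$ is in fact slightly more careful than the paper's phrasing), but it duplicates work the paper reuses; the identification $S^{*}=F$ is the observation that makes this theorem essentially free.
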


\begin{proof} 
Notice, since the coefficients $\mu$ and $\gamma$ are real valued, we have that equation \eqref{adj-prob} and the equation in Theorem \ref{gen-adjoint} to define $S^*$ are the same. This implies that the operator $F$ defined by \eqref{adj-opF} and $S^*$ given by Theorem \ref{gen-adjoint} coincide. Therefore, we have that 
$$Range(F) = Range(S^*) $$ 
which gives the result by appealing to Theorem \ref{adj-complex-case}. 
\end{proof}

With \thmref{greenchar}, we have all we need to conclude that the regularized factorization method can be used to recover an unknown region $D$ from the knowledge of the difference of the DtN mappings $(\Lambda - \Lambda_0)$.

\begin{theorem}\label{real-case-thm}
The difference of the DtN mappings $(\Lambda - \Lambda_0) : H^{1/2} ( \partial \Omega) \rightarrow H^{-1/2} (\partial \Omega)$ uniquely determines $D$ such that for any $z \in \Omega$ 
$$z \in D \quad \text{if and only if} \quad \liminf_{\alpha \rightarrow 0 } \langle f_{\alpha}^{z} , (\Lambda - \Lambda_{0}) f_{\alpha}^{z} \rangle_{\partial \Omega} < \infty$$ 
where $f_{\alpha}^{z}$ is the regularized solution to $(\Lambda - \Lambda_0) f^{z} = \partial_{\nu} \mathbb{G}(\cdot , z ) \big \rvert_{\partial \Omega}$. 
\end{theorem}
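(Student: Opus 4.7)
The plan is to collect the properties of $(\Lambda-\Lambda_0)$ and its factorization that were established in Theorems \ref{factorization}, \ref{gen-tcoercive}, \ref{gen-DtNprop}, and \ref{greenchar}, and then feed them into the abstract range test from Theorem 2.3 of \cite{harris1}. The overall structure will therefore be short: it is really a verification that every hypothesis of the abstract result is in place, followed by the choice of the specific right-hand side $\partial_\nu \mathbb{G}(\cdot,z)|_{\partial\Omega}$.

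First I would recall the symmetric factorization $(\Lambda-\Lambda_0)=S^{*}TS$ from Theorem \ref{factorization}, together with the three structural facts needed to apply \cite[Thm.~2.3]{harris1}: $(\Lambda-\Lambda_0)$ is compact, injective, and has dense range (Theorem \ref{gen-DtNprop}); the middle operator $T:H^{1}(\partial D)\to H^{-1}(\partial D)$ is coercive (Theorem \ref{gen-tcoercive}); and $S$ is injective (also established in Theorem \ref{gen-adjoint}). Because $(\Lambda-\Lambda_0)$ is a compact, injective operator with dense range between Hilbert spaces, any standard regularization scheme (Tikhonov, spectral cut-off) yields a well-defined regularized solution $f_\alpha$ to $(\Lambda-\Lambda_0)f=\ell$ for any $\ell\in H^{-1/2}(\partial\Omega)$.

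Next I would invoke Theorem 2.3 of \cite{harris1}, which, in the present factorized setting, gives the equivalence
\begin{equation*}
\ell\in \mathrm{Range}(S^{*}) \quad \Longleftrightarrow \quad \liminf_{\alpha\to 0}\,\langle f_\alpha,(\Lambda-\Lambda_0)f_\alpha\rangle_{\partial\Omega}<\infty,
\end{equation*}
where $f_\alpha$ is the regularized solution to $(\Lambda-\Lambda_0)f=\ell$. Specializing this identity to $\ell=\partial_\nu \mathbb{G}(\cdot,z)|_{\partial\Omega}$, and combining it with Theorem \ref{greenchar}, which asserts that $\partial_\nu\mathbb{G}(\cdot,z)|_{\partial\Omega}\in\mathrm{Range}(S^{*})$ if and only if $z\in D$, produces exactly the stated characterization of $D$.

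The only subtlety, which I expect to be the mildest of obstacles, is purely bookkeeping: the range test in \cite{harris1} is typically stated on a single Hilbert space, whereas here $S^{*}:H^{-1}(\partial D)\to H^{-1/2}(\partial\Omega)$ acts between different spaces. I would handle this exactly as in the complex-valued case of Section \ref{complex-section}, using the sesquilinear dual-pairing \eqref{out-dualprod} so that the adjoint of $S$ is understood with respect to the $H^{1/2}/H^{-1/2}$ duality on $\partial\Omega$ and the $H^{1}/H^{-1}$ duality on $\partial D$; the positivity needed to run the range test then follows from $\langle f,(\Lambda-\Lambda_0)f\rangle_{\partial\Omega}=\langle Sf,TSf\rangle_{\partial D}$ together with the coercivity of $T$ established in Theorem \ref{gen-tcoercive}. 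No further estimates are required beyond those already proven, so the proof reduces to citing Theorems \ref{factorization}--\ref{greenchar} and Theorem 2.3 of \cite{harris1}.
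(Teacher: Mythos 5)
Your proposal is correct and follows essentially the same route as the paper: the paper gives no separate argument for Theorem \ref{real-case-thm}, but simply observes (in the paragraph preceding the statement) that Theorems \ref{factorization}, \ref{gen-tcoercive}, and \ref{gen-DtNprop} verify the hypotheses of Theorem 2.3 of \cite{harris1}, yielding the range test for $\mathrm{Range}(S^{*})$, and then combines this with Theorem \ref{greenchar} exactly as you do. Your remark about the dual pairings is a reasonable bit of extra care but introduces nothing beyond what the paper already set up in \eqref{out-dualprod}.
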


This concludes the shape reconstruction problem for an extended region for the case when the boundary coefficients are strictly real-valued. In the following section, we provide some numerical experiments for reconstructing $D$.

%%%%%%%%%%%%%%%%%%%%%%%%%%%%%%%%%%%%%%%%%%%%%%%%%%%%%%%%%%%
\section{\textbf{Numerical Validation}}\label{numerical-validation}
In this section, we present numerical examples for the regularized factorization method developed in Sections \ref{complex-section} and \ref{real-section} for solving the inverse shape problem. Our numerical experiments are done in \texttt{MATLAB} 2020a. For simplicity, we will consider the problem in $\mathbb{R}^2$ where $\Omega$ is the unit disk. Notice that the trace spaces $H^{\pm 1/2} (\partial \Omega)$ can be identified with $H_{\text{per}}^{\pm 1/2} [0 , 2 \pi ]$.
 To apply \thmref{complex-case-thm} and \thmref{real-case-thm}, we need the normal derivative of Green's function $\mathbb{G}(\cdot , z)$ with zero Dirichlet condition on the boundary of the unit disk. In polar coordinates, it is well known that the normal derivative of Green's function for the unit disk is given by the Poisson kernel
$$ \partial_{\nu(z)} \mathbb{G}\big ( \cdot \, , z \big ) \big|_{\partial \Omega} = - \frac{1}{2 \pi} \bigg[ \frac{1 - |z|^2 }{|z|^2 + 1 - 2 |z| \text{cos}(\cdot \,- \theta_{z})}   \bigg ]$$ 
where $\theta_{z}$ is the polar angle of the sampling point $z \in \Omega$ in polar coordinates.\\

We now let the matrix $\textbf{A} \in \C^{N \times N}$ represent the discretized operator $(\Lambda - \Lambda_0)$ and the vector \textbf{b}$_z = \big [ \partial_{\nu(z)} \mathbb{G}\big (  \theta_j ,z \big ) \big ]_{j=1}^{N}$. In our numerical experiments, we add random noise to the discretized operator \textbf{A} such that 
$$ \text{\textbf{A}}^{\delta} = \big [ \text{\textbf{A}}_{i,j} \big( 1 + \delta \text{\textbf{E}}_{i,j} \big) \big ]_{i,j=1}^{N} \quad \text{where} \quad \norm{\text{\textbf{E}}}_{2} = 1.$$ 
Here, the matrix \textbf{E} is taken to have random entries uniformly distributed between $[-1,1]$ and $\delta$ is the relative noise level added to the data in the sense that $\|{\text{\textbf{A}}^{\delta} - \text{\textbf{A}}}\|_{2} \leq \delta \|{\text{\textbf{A}}}\|_{2}$.

When the boundary parameters $\mu$ and $\gamma$ are complex-valued, recall that the we use the imaginary part of the current-gap operator to recover the region $D$. To this end, we denote the matrix $\text{Im} (\textbf{A}^{\delta} )$ as the discretization of the operator $\text{Im}(\Lambda - \Lambda_0)$ with random noise. We now define the discretized imaginary part of the data operator as 
$$\text{Im} (\textbf{A}^{\delta} ) = \frac{1}{2 \text{i}} \big [ \textbf{A}^{\delta} - (\textbf{A}^{\delta})^{*} \big ]$$
Hence, to compute the indicator associated with \thmref{complex-case-thm}, we solve 
$$\text{Im} (\textbf{A}^{\delta} ) \textbf{f}_z = \textbf{b}_z .$$
As specified in \thmref{cgo-comp-id}, the current gap operator is compact which implies that the matrix $\textbf{A}$ is ill-conditioned. Hence, one needs to employ a regularization technique to find an approximate solution to the discretized equation. In our experiments, we use the Spectral cut-off as the regularization scheme and follow a similar procedure demonstrated in \cite{harris2} where \textbf{f}$_z^{\alpha}$ represents the regularized solution to $\text{Im} (\textbf{A}^{\delta} ) \textbf{f}_z = \textbf{b}_z$ and $\alpha > 0$ denotes the regularization parameter. To define the imagining functional, we follow \cite{harris1} to have the following
$$ \big( \textbf{f}_z^{\alpha} , \text{Im}(\textbf{A}^{\delta}) \textbf{f}_z^{\alpha} \big) =  \sum\limits_{j=1}^{N} \frac{\phi^2(\sigma_j  ; \alpha)}{\sigma_j} \big|({\bf u}_j , {\bf b}_z)\big|^2 .$$
Here $\sigma_j$ and ${\bf u}_j$ denotes the singular values and left singular vectors of the matrix $\text{Im}({\textbf{A}}^{\delta})$, respectively. Also, $\phi(t ; \alpha)$ denotes the filter function defined by the regularization scheme used to solve $ \text{Im}(\textbf{A}^{\delta}) \textbf{f}_z = \textbf{b}_z$. The filter function used in our examples is given by 
\begin{align}\label{filters}
 \displaystyle{  \phi(t ; \alpha)= \left\{\begin{array}{lr} 1, &  t^2\geq \alpha,  \\
 				&  \\
 0,&  t^2 < \alpha
 \end{array} \right.}  
\end{align}
which corresponds to Spectral cut-off. Using the above expressions, we can recover the unknown region for the case when the boundary parameters are complex-valued by defining 
 $$W_{\text{reg}} (z) = \big( \textbf{f}^{\alpha}_z , \text{Im}(\textbf{A}^{\delta}) \textbf{f}^{\alpha}_z \big)^{-1}.$$

For the case when the boundary parameters $\mu$ and $\gamma $ are real-valued, then we use discretized operator $\textbf{A}$. By \thmref{gen-DtNprop}, the data operator $(\Lambda - \Lambda_0)$ is compact, which implies that \textbf{A} is ill-conditioned also in this case. We follow a similar procedure from the previous case to define 
$$ \big( \textbf{f}^{\alpha}_z , \textbf{A}^{\delta} \textbf{f}^{\alpha}_z \big) =  \sum\limits_{j=1}^{N} \frac{\phi^2(\sigma_j  ; \alpha)}{\sigma_j} \big|({\bf u}_j , {\bf b}_z)\big|^2 .$$
In this case, $\sigma_j$ and ${\bf u}_j$ denotes the singular values and left singular vectors of the matrix ${\textbf{A}}^{\delta}$, respectively. We also use the same filter function $\phi(t ; \alpha)$ to apply the Spectral cut-off regularization scheme to solve $ \textbf{A}^{\delta} \textbf{f}_z = \textbf{b}_z$. With the above expressions, we can recover the unknown region for the case when the boundary parameters are real valued by defining 
 $$W_{\text{reg}} (z) = \big( \textbf{f}^{\alpha}_z , \textbf{A}^{\delta} \textbf{f}^{\alpha}_z \big)^{-1}$$
In either cases we plot 
$$ \quad W(z) = \bigg \rvert \frac{W_{\text{reg}} (z)}{\|{W_{\text{reg}} (z)}\|_{\infty}} \bigg \rvert^{p}$$
where \thmref{complex-case-thm} and \thmref{real-case-thm} both imply that $W(z) \approx 1$ provided that $z \in D$ as well as $W(z) \approx 0$  provided that $ z \notin D$. In our calculations $p > 0$ is a fixed chosen parameter to sharpen the resolution of the imaging functional. In the following examples  we use the function $W(z)$ to visualize the defective region. \\

%%%%%%%%%%%%%%%%%%%%%%%%%%%%%%%%%%%%%%%%%%%%%%%%%%%%%%%%%%%
\noindent\textbf{Numerical reconstruction of a circular region:}\\
In polar coordinates, we assume $\partial D$ is given by $\rho (\text{cos} ( \theta ), \text{sin} (\theta))$ for some constant $\rho \in (0,1)$. As similarly demonstrated in \cite{EIT-granados1}, since $\Omega$ is taken to be the unit disk in $\mathbb{R}^2$, we make the ansatz that the electrostatic potential $u(r,\theta)$ has the following series representation
\begin{equation}\label{taurus}
u(r , \theta) = a_0 + b_0 \, \text{ln} \, r + \sum_{|n|=1}^{\infty} \left[ a_n r^{|n|} + b_n r^{-|n|} \right] \text{e}^{\text{i}n \theta} \quad \text{in} \quad \Omega \backslash D
\end{equation}
whereas
$$u(r , \theta) = c_0 + \sum_{|n|=1}^{\infty} c_n r^{|n|} \text{e}^{\text{i}n \theta} \quad \text{in} \quad D.$$
Note, that the electrostatic potential $u(r, \theta )$ is harmonic in both the annular and circular regions which are separated by the interior boundary $\partial D$.

Recall, that the boundary parameters $\gamma$ and $\mu$ satisfy \eqref{gamma-bounds} and \eqref{mu-bounds}, respectively. Furthermore, for simplicity, we assume that $\gamma$ and $\mu$ are constant. Thus, we are able to determine the Fourier coefficients $a_n$ and $b_n$ by using the boundary conditions at $r=1$ and $r = \rho$ given by
$$u(1, \theta ) = f(\theta), \quad u^{+} (\rho , \theta ) = u^{-} ( \rho , \theta ),$$  $$\quad \text{and} \quad \partial_{r} u^{+} (\rho , \theta ) - \partial_{r} u^{-} (\rho , \theta ) = \bigg( - \dfrac{\mu}{\rho^2} \dfrac{\partial^2}{\partial \theta^2} + \gamma \bigg) u(\rho , \theta). $$
We let $f_n$ for $n \in \mathbb{Z}$ denote the Fourier coefficients for the voltage $f$. Note, that the boundary condition at $r=1$ above gives that $$ a_0 = f_0 \quad \text{and} \quad a_n + b_n = f_n \quad \text{for all} \enspace n \neq 0.$$
The first boundary conditions at $r = \rho$ give that  
$$ b_0 = \frac{\gamma \rho}{1 - \gamma \rho \text{ln} \, \rho} f_0 \quad \text{and} \quad b_n = \rho^{2|n|} (c_n  - a_n ). $$ Using the generalized Robin transmission condition, and after some calculations we get that 
$$a_n = \left[ \frac{ \gamma \rho^{2|n|} + 2|n| \rho^{|n|}+ \mu |n|^2}{ \mu |n|^2 + 2|n| \rho^{|n|} + \gamma \rho^{2|n|} (1 - \rho^{2|n|}) - \mu |n|^2 \rho^{2|n|}  }  \right] f_n$$ 
and  $$b_n = \left[ \frac{- \rho^{2|n|} (\gamma \rho^{2|n|} + \mu |n|^2)}{ \mu |n|^2 + 2|n| \rho^{|n|} + \gamma \rho^{2|n|} (1 - \rho^{2|n|}) - \mu |n|^2 \rho^{2|n|}} \right] f_n \quad \text{for all} \enspace  n \neq 0.$$ 
Plugging the sequences into \eqref{taurus} gives that the corresponding current on the boundary of the unit disk is given by
\begin{equation}\label{unormal}
\partial_{r} u(1 , \theta ) = \sigma_0 f_0 + \sum_{|n|=1}^{\infty} |n| \sigma_n f_n \text{e}^{\text{i}n \theta}
\end{equation}
where 
$$\sigma_0 = \frac{\gamma \rho}{1 - \gamma \rho \text{ln} \, \rho} \quad \text{and} \quad \sigma_n = \dfrac{ 2|n| \rho^{|n|} + (\mu |n|^2 + \gamma \rho^{2|n|}) (1 + \rho^{2|n|})}{2|n| \rho^{|n|} + (\mu |n|^2 + \gamma \rho^{2|n|})(1 - \rho^{2|n|})} \quad \text{for all} \quad n \neq 0.$$
It is clear that the electrostatic potential and subsequent current for the material without a defective region is given by 
\begin{equation}\label{u0normal}
u_0 (r , \theta ) = f_0 + \sum_{|n|=1}^{\infty} f_n r^{|n|} \text{e}^{\text{i}n \theta} \quad \text{and} \quad \partial_{r} u_0 (1 , \theta ) = \sum_{|n|=1}^{\infty} |n| f_n \text{e}^{\text{i}n \theta}.
\end{equation}
Subtracting equation \eqref{u0normal} from \eqref{unormal} gives a series representation of the current gap operator. By interchanging summation with integration we obtain 
$$ (\Lambda - \Lambda_0)f = \frac{1}{2 \pi} \int_{0}^{2 \pi} K (\theta , \phi) f ( \phi) \, \text{d} \phi \quad \text{where} \quad K(\theta , \phi ) = \sigma_0 + \sum_{|n|=1}^{\infty} |n| (\sigma_n - 1) \text{e}^{\text{i}n (\theta - \phi)}.$$

This representation allows one to easily construct synthetic data for numerical experiments. We now introduce a theorem regarding the convergence of the truncated series approximation for the above integral operator.

\begin{theorem}\label{convergence}
Let $(\Lambda - \Lambda_0)_N : H^{1/2} ( 0 , 2 \pi) \rightarrow H^{-1/2} (0 , 2 \pi)$ be the truncated series approximation of $(\Lambda - \Lambda_0)$. Then we have that in the operator norm 
$$ \|{(\Lambda - \Lambda_0) - (\Lambda - \Lambda_0)_N}\| \leq C \rho^{2(N+1)}$$
where $C >0$ is independent of $N$.
\end{theorem}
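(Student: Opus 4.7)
The plan is to view the remainder operator as a Fourier multiplier on the torus and reduce the operator-norm estimate to a scalar inequality. From the explicit series representation derived above, $(\Lambda - \Lambda_0)$ acts diagonally on the Fourier basis $\{e^{in\theta}\}$ with symbol $\sigma_0$ at $n=0$ and $|n|(\sigma_n - 1)$ for $n\neq 0$. The truncation $(\Lambda-\Lambda_0)_N$ simply zeros out the symbols with $|n| > N$, so the difference is again a Fourier multiplier supported on $|n| > N$. On the periodic Sobolev pair $H^{\pm 1/2}_{\mathrm{per}}[0,2\pi]$, the standard multiplier-norm identity
\begin{equation*}
\|{T}\|_{H^{1/2}\to H^{-1/2}} = \sup_n (1+n^2)^{-1/2}|m_n|,
\end{equation*}
combined with the trivial inequality $|n|(1+n^2)^{-1/2}\leq 1$, reduces the theorem to proving the uniform estimate $|\sigma_n - 1| \leq C\rho^{2|n|}$ for every $|n|\geq N+1$.

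Next I would rewrite the scalar formula so that the decay is transparent. A direct calculation from the definition of $\sigma_n$ yields
\begin{equation*}
\sigma_n - 1 = \frac{2\rho^{2|n|}\bigl(\mu n^2 + \gamma\rho^{2|n|}\bigr)}{2|n|\rho^{|n|} + \bigl(\mu n^2 + \gamma\rho^{2|n|}\bigr)\bigl(1-\rho^{2|n|}\bigr)}.
\end{equation*}
The modulus of the numerator is bounded by $2\rho^{2|n|}(|\mu|n^2 + |\gamma|)$ since $0<\rho<1$. The remaining, and only substantive, step is a lower bound of order $n^2$ on the modulus of the denominator: taking the real part and invoking the coercivity assumptions \eqref{gamma-bounds}--\eqref{mu-bounds} (which in the scalar $\mathbb{R}^2$ setting reduce to $\mathrm{Re}(\mu)\geq\beta_1>0$ and $\mathrm{Re}(\gamma)\geq\zeta_1>0$), together with $\rho^{2|n|}\leq\rho^{2}$ for $|n|\geq 1$, produces a lower bound of the form $\beta_1(1-\rho^2)n^2$. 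Dividing then gives $|\sigma_n-1|\leq C\rho^{2|n|}$ with $C$ independent of $n$, and taking the supremum over $|n|>N$ yields the desired decay $C\rho^{2(N+1)}$.

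I expect the main obstacle to be a conscientious verification of the denominator lower bound in the complex-valued regime; one wants to be sure that the three terms $2|n|\rho^{|n|}$, $\mu n^2(1-\rho^{2|n|})$, and $\gamma\rho^{2|n|}(1-\rho^{2|n|})$ do not cancel after complex addition. Passing to the real part circumvents this entirely, since each contribution to $\mathrm{Re}(\text{denominator})$ is nonnegative under the coercivity hypotheses. In the purely real case the argument is immediate from the positivity of $\mu$ and $\gamma$, and no further analytical machinery beyond this algebraic computation is needed.
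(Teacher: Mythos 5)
Your proof is correct, and it reaches the same essential scalar estimate as the paper --- namely that $|\sigma_n-1|\leq C\rho^{2|n|}$ with $C$ depending on $\mu,\gamma,\rho$ but not on $n$ --- but the reduction from that estimate to the operator norm is genuinely different. The paper applies the Cauchy--Schwarz inequality in $\ell^2$ to the tail series, obtains a pointwise ($L^\infty$) bound on $[(\Lambda-\Lambda_0)-(\Lambda-\Lambda_0)_N]f$ controlled by $\|f\|_{H^{1/2}}\bigl(\sum_{|n|\geq N+1}|n|\,|\sigma_n-1|^2\bigr)^{1/2}$, and then invokes the embedding of $L^\infty$ into $H^{-1/2}$; your route instead observes that the remainder is a diagonal Fourier multiplier on the pair $H^{1/2}_{\mathrm{per}}\to H^{-1/2}_{\mathrm{per}}$, whose norm is exactly $\sup_{|n|>N}(1+n^2)^{-1/2}|n|\,|\sigma_n-1|\leq\sup_{|n|>N}|\sigma_n-1|$. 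Your version is arguably tighter: the paper's summed tail $\sum_{|n|\geq N+1}|n|\rho^{4|n|}$ actually carries an extra factor growing like $N$ before the square root, so its claimed $N$-independent constant requires either absorbing that factor into a slightly larger base $\rho$ or a more careful bookkeeping, whereas your supremum bound gives $C\rho^{2(N+1)}$ with no loss. You also make explicit the two steps the paper compresses into ``after some calculations'': the closed form $\sigma_n-1=2\rho^{2|n|}(\mu n^2+\gamma\rho^{2|n|})/\bigl(2|n|\rho^{|n|}+(\mu n^2+\gamma\rho^{2|n|})(1-\rho^{2|n|})\bigr)$ and the lower bound $\mathrm{Re}(\text{denominator})\geq\beta_1(1-\rho^2)n^2$ obtained from \eqref{gamma-bounds}--\eqref{mu-bounds}, which correctly handles possible cancellation in the complex-coefficient case. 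Both approaches are sound; yours is the cleaner of the two here.
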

\begin{proof}
To prove the claim, consider ${\displaystyle \big[(\Lambda - \Lambda_0) - (\Lambda - \Lambda_0)_N\big] f = \sum_{|n|= N+1}^{\infty} |n| f_n (\sigma_n - 1) \text{e}^{\text{i}n \theta}. }$
By the Cauchy-Schwarz inequality in $\ell^2$ we have that
\begin{align*}
	\Big | \big[(\Lambda - \Lambda_0) - (\Lambda - \Lambda_0)_N\big] f \Big |^2   &\leq  \bigg ( \sum_{|n|= N+1}^{\infty} |\sigma_n - 1|^2 |n| |\text{e}^{\text{i}n \theta}|^2 \bigg ) \bigg ( \sum_{|n|= N+1}^{\infty} |n| |f_n|^2 \bigg ) \\
	&\leq \norm{f}_{H^{1/2}(0,2 \pi)}^{2} \bigg ( \sum_{|n|= N+1}^{\infty} |\sigma_n - 1|^2 |n| \bigg )
\end{align*} 
After some calculations, we obtain that $|\sigma - 1|^2 |n| \leq C_{\mu , \gamma, \rho} |n| \rho^{4 |n|}$, where $C_{\mu , \gamma, \rho}$ is a positive constant that depends on $\mu , \gamma$, and $\rho$, but independent of $n$ and $N$. This gives that $$\norm{[(\Lambda - \Lambda_0) - (\Lambda - \Lambda_0)_N] f }_{\infty} \leq C_{\mu , \gamma, \rho} \norm{f}_{H^{1/2}(0,2 \pi)} \rho^{2(N+1)}. $$
We obtain our result by using the fact that the $H^{-1/2} (0, 2 \pi)$--norm is bounded by the $L^{\infty} ( 0 , 2 \pi )$--norm.
\end{proof}
\thmref{convergence} demonstrates that the convergence for the approximation is geometric. Thus, we do not need many terms in the kernel function to approximate the data operator in order to obtain desirable results. In the following examples, we approximate the kernel function $K(\theta , \phi )$ given above  by truncating the series for $|n|=1, \hdots , 10$. With this, we then discretize the truncated integral operator by a 64 equally spaced grid on $[0 ,2 \pi)$ using a collocation method. \\

\noindent{\bf Example 1: complex coefficients}\\
For numerical reconstructions here, we set the decay parameter $p = 1$. In Figure \ref{complex_circle_v1}, we take $\rho = 0.2$ and $\delta = 0.05$ which corresponds to $5 \% $ relative random noise added to the data. The boundary coefficients are $\gamma = 2 - 0.5  \text{i}$ and $\mu = 0.1 - \text{i}$. Here the Spectral cut-off regularization parameter is taken to be $\alpha=10^{-17}$. The dotted lines are the boundaries of $\partial \Omega$ and $\partial D$ with the solid line being the approximation via the level curve, which is chosen where the contour plot goes from red to black.
\begin{figure}[!h]
\centering 
\includegraphics[scale=0.17]{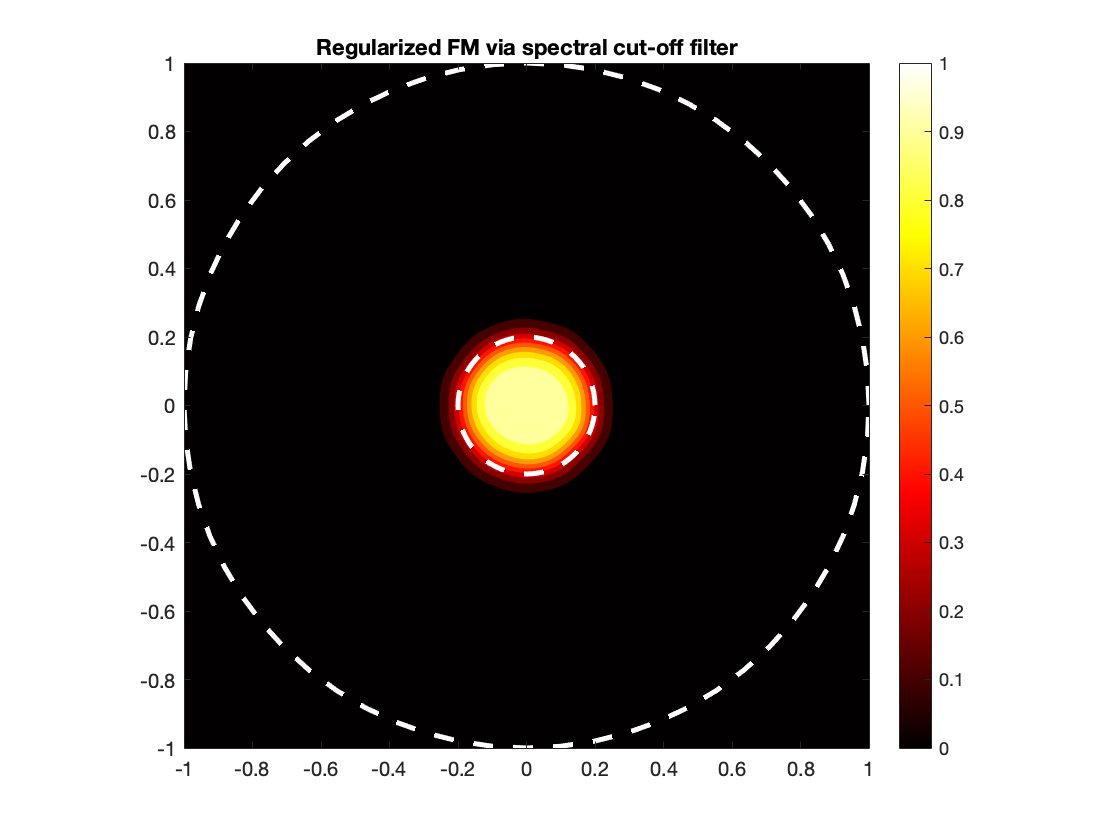} \hspace{-.5in}
\includegraphics[scale=0.17]{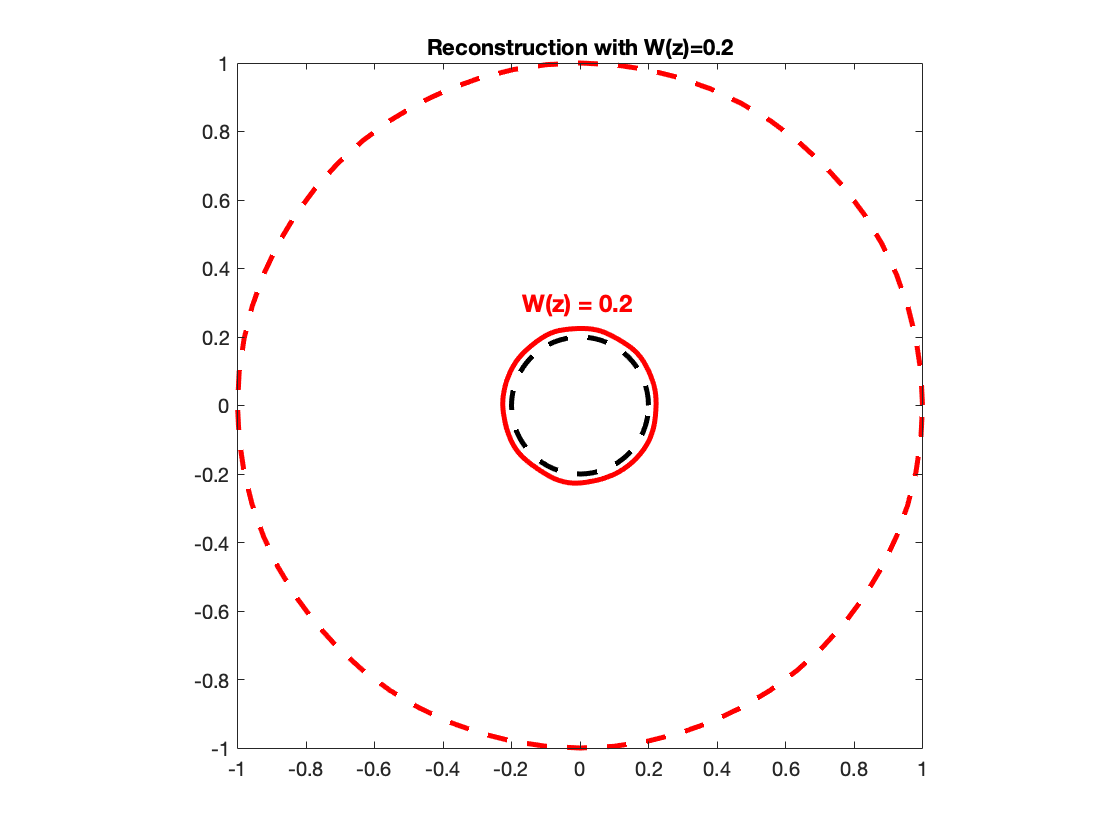}
\caption{Reconstruction of a circular region with $\rho=0.2$ via the regularized factorization method. Boundary coefficients are $\gamma = 2 - 0.5 \text{i}$ and $\mu = 0.1 -  \text{i}$. Contour plot of $W(z)$ on the left and level curve when $W(z)= 0.2$ on the right.}
\label{complex_circle_v1}
\end{figure}

In Figure \ref{complex_circle_v2}, we take $\rho = 0.7$ and increase error to $\delta = 0.1$ which corresponds to $10 \% $ relative random noise added to the data. The boundary coefficients are taken to be $\gamma = 2-3 \text{i}$ and $\mu = 1 - 4 \text{i}$. Here the Spectral cut-off regularization parameter is set to $\alpha=10^{-4}$. The dotted lines are the boundaries of $\partial \Omega$ and $\partial D$ with the solid line being the approximation via the level curve.\\
\begin{figure}[!h]
\centering 
\includegraphics[scale=0.17]{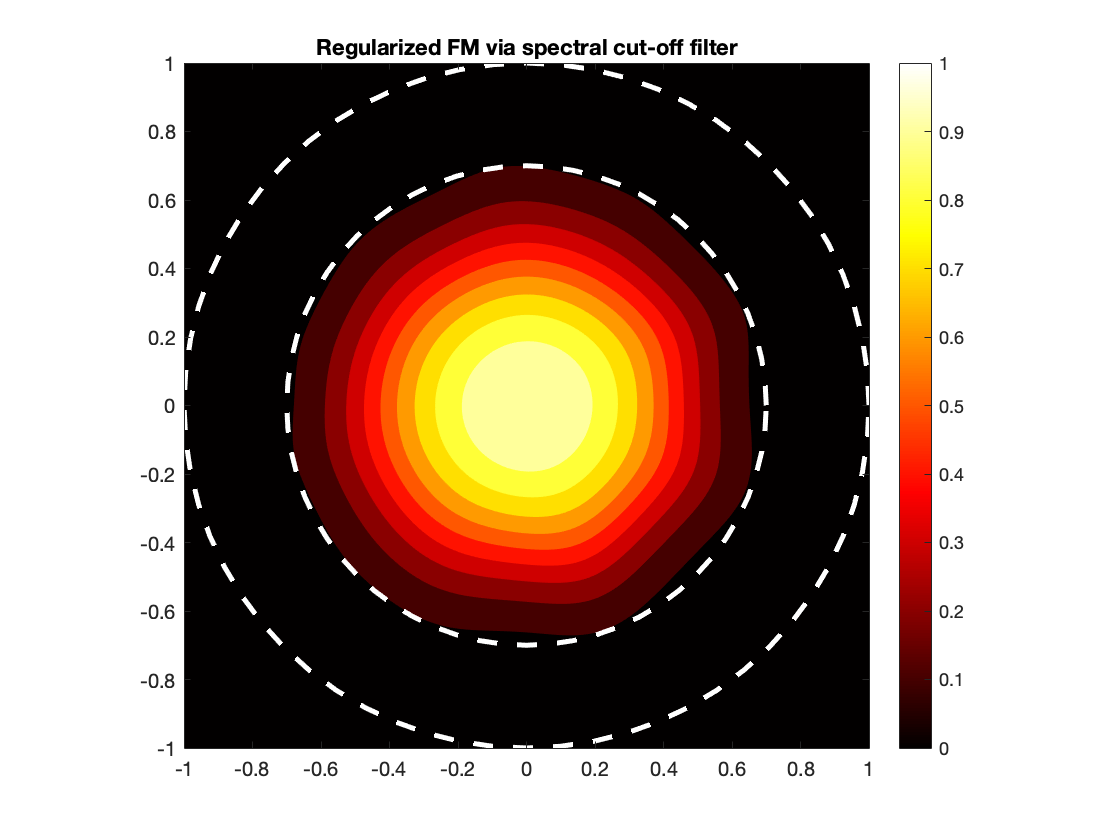} \hspace{-.5in}
\includegraphics[scale=0.17]{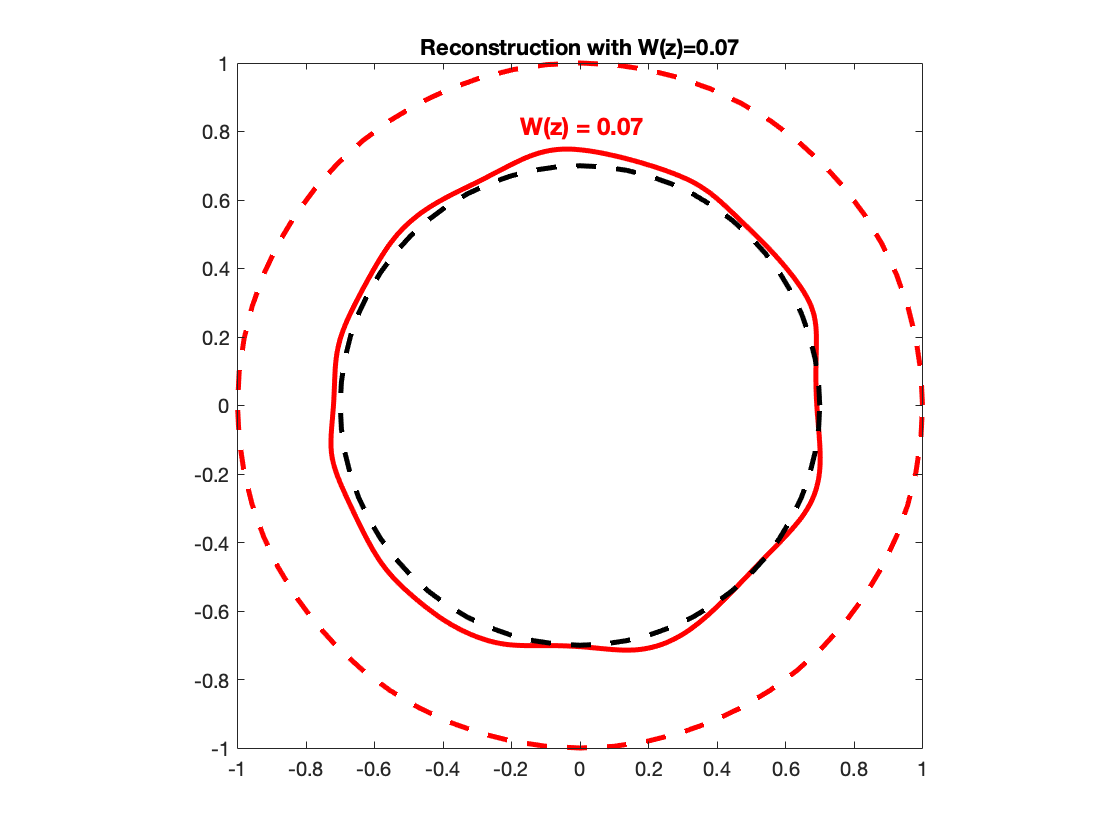}\caption{Reconstruction of a circular region with $\rho=0.7$ via the regularized factorization method. Boundary coefficients are $\gamma = 2-3 \text{i}$ and $\mu = 1 - 4 \text{i}$. Contour plot of $W(z)$ on the left and level curve when $W(z)= 0.2$ on the right.}
\label{complex_circle_v2}
\end{figure}

\noindent{\bf Example 2: real coefficients}\\
For numerical reconstructions here, we let the decay parameter $p = 4$. In the following numerical experiments, we consider cases where the boundary coefficients of $\partial D$ are strictly real-valued. We also continue using the Spectral cut-off regularization scheme. In Figure \ref{real_circle_v1}, we take $\rho = 0.25$ and $\delta = 0.05$ which corresponds to $5 \% $ relative random noise added to the data. The Spectral cut-off regularization parameter $\alpha=10^{-15}$. The dotted lines are the boundaries of $\partial \Omega$ and $\partial D$ with the solid line is the approximation via the level curve.
\begin{figure}[!h]
\centering 
\includegraphics[scale=0.17]{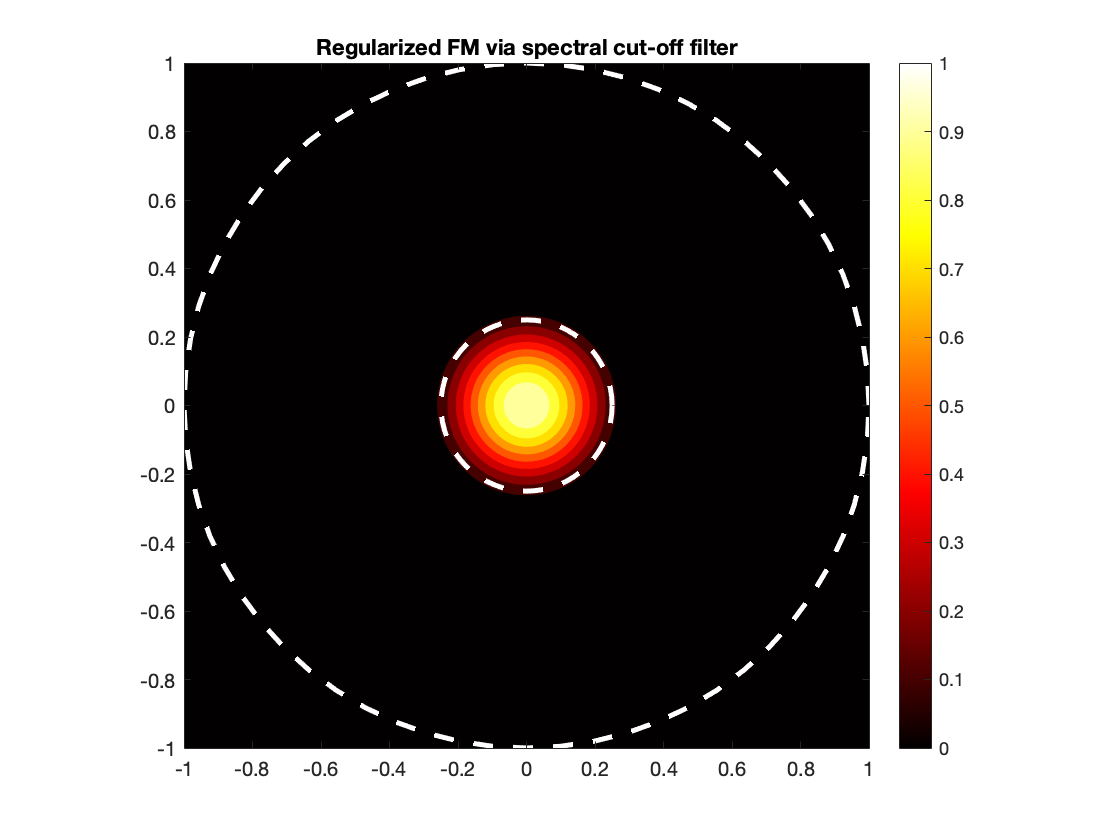} \hspace{-.5in}
\includegraphics[scale=0.17]{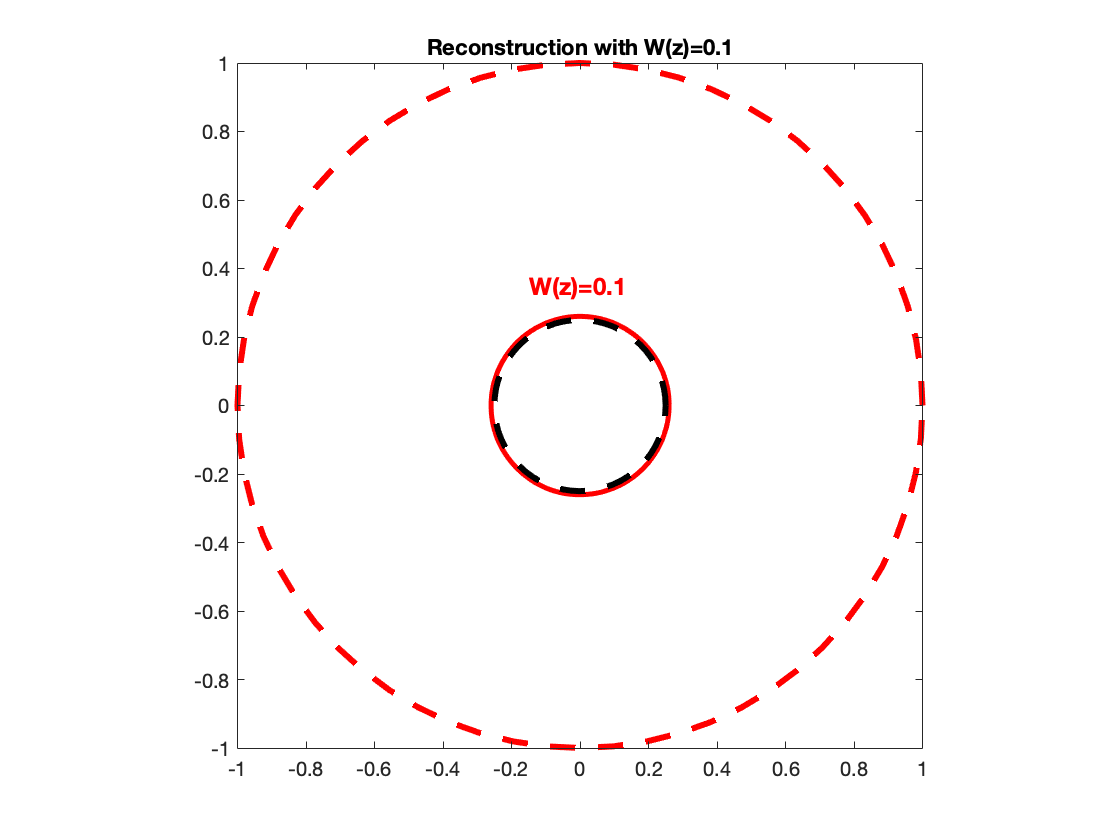}\caption{Reconstruction of a circular region with $\rho=0.25$ via the regularized factorization method. Boundary coefficients are $\gamma = 1.2$ and $\mu = 0.5$. Contour plot of $W(z)$ on the left and level curve when $W(z)= 0.1$ on the right.}
\label{real_circle_v1}
\end{figure}

In Figure \ref{real_circle_v2}, we take $\rho = 0.75$ and $\delta = 0.1$ which corresponds to $10 \% $ relative random noise added to the data. Here, the Spectral cut-off regularization parameter is $\alpha=10^{-5}$. Once again, the dotted lines are the boundaries of $\partial \Omega$ and $\partial D$.\\
\begin{figure}[!h]
\centering 
\includegraphics[scale=0.17]{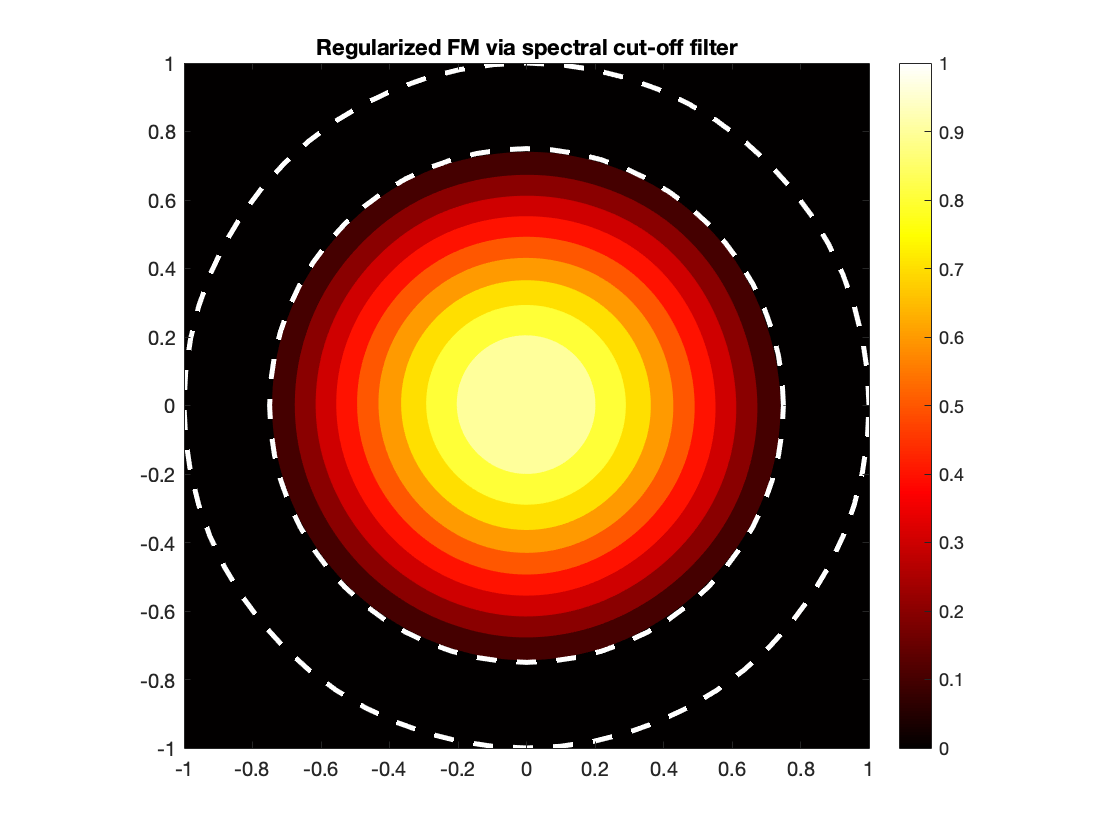} \hspace{-.5in}
\includegraphics[scale=0.17]{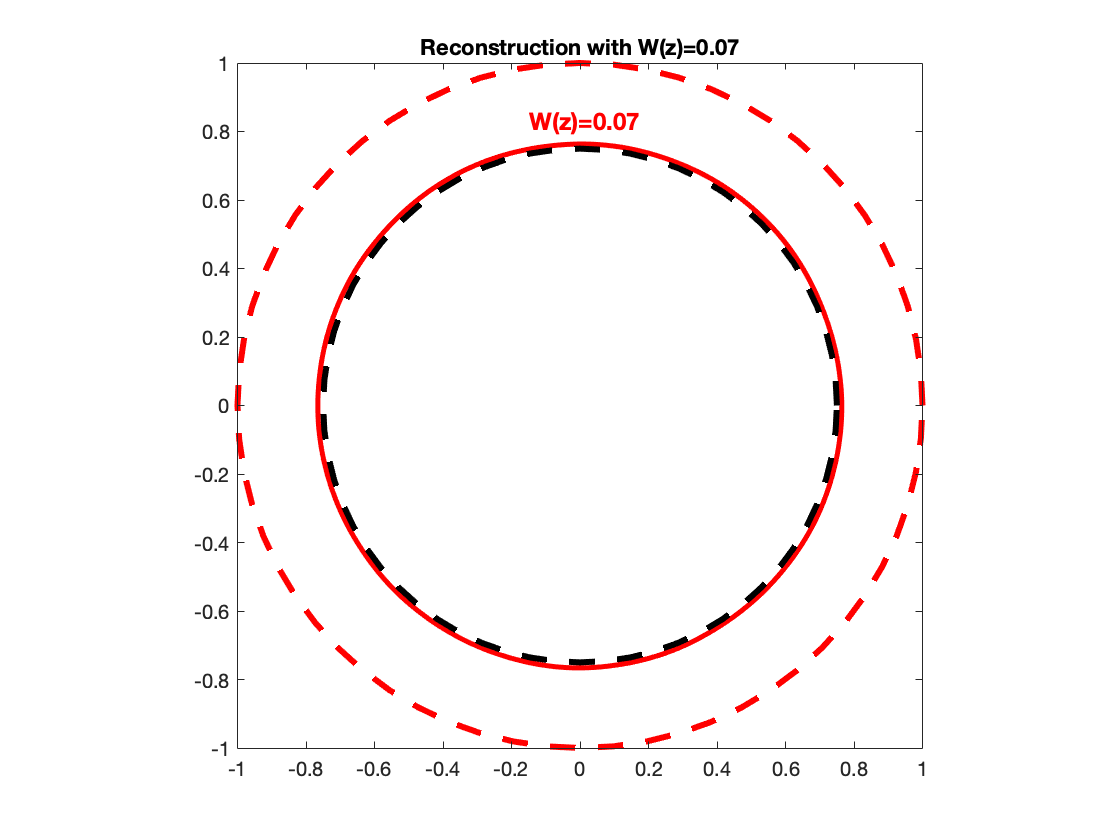}\caption{Reconstruction of a circular region with $\rho=0.75$ via the regularized factorization method. Boundary coefficients are $\gamma = 0.6$ and $\mu = 1.6$. Contour plot of $W(z)$ on the left and level curve when $W(z)= 0.07$ on the right.}
\label{real_circle_v2}
\end{figure}

%%%%%%%%%%%%%%%%%%%%%%%%%%%%%%%%%%%%%%%%%%%%%%%%%%%%%%%%%%%
\section{Conclusions}\label{conclusion}
In this paper, we have studied the Regularized Factorization Method for recovering an inclusion from electrostatic data. The factorization of the data operator depends on whether the interior boundary parameters are complex or real valued. Since we employ a Qualitative Method instead of an iterative method we do not require a priori knowledge about the region of interest or boundary condition. We reduced the regularity assumptions from previous works by requiring the full knowledge of the DtN mapping. We note that the analysis provided here can be used to study this inverse shape problem in $\R^d$ for $d = 2,3$. Our algorithm allows for fast and accurate reconstruction with little a priori knowledge of the region of interest $D$. We also showed the uniqueness of the inverse impedance problem. A future direction for this project can be to study the inverse parameter problem and derive a non-iterative method for recovering the boundary coefficients $\mu$ and $\gamma$. One could also study the error stability as our numerical experiments suggest that our algorithm is stable with respect to noise in Cauchy data. Lastly, one could also consider studying the direct sampling method(see for e.g. \cite{DSM-DOT, DSM-EIT, DSM-nf}) for this problem. \\

\noindent{\bf Acknowledgments:} The research of G. Granados, I. Harris and H. Lee is partially supported by the NSF DMS Grant 2107891.

%%%%%%%%%%%%%%%%%%%%%%%%%%%%%%%%%%%%%%%%%%%%%%%%%%%%%%%%%%%

\end{document}